\UseRawInputEncoding
\documentclass[11pt]{article}
\usepackage[T1]{fontenc}
\usepackage{lmodern}
\usepackage{amsmath,amssymb,amsthm,mathtools}
\usepackage[margin=30mm]{geometry}
\usepackage{microtype}
\usepackage{xcolor}
\usepackage[pdfborder={0 0 0}]{hyperref}

\newtheorem{theorem}{Theorem}[section]
\newtheorem{proposition}[theorem]{Proposition}
\newtheorem{lemma}[theorem]{Lemma}
\theoremstyle{remark}
\newtheorem{remark}[theorem]{Remark}
\newtheorem*{remark*}{Remark}
\newtheorem{question}[theorem]{Question}
\numberwithin{equation}{section}
\newcommand{\R}{\mathbb{R}}
\newcommand{\eps}{\varepsilon}
\newcommand{\dd}{\,\mathrm{d}}
\newcommand{\calR}{\mathcal{R}}
\DeclareMathOperator{\Tr}{Tr}
\DeclareMathOperator{\interior}{int}
\DeclareMathOperator{\spanop}{span}
\DeclareMathOperator{\supp}{supp}
\DeclareMathOperator{\diag}{diag}
\DeclarePairedDelimiter{\abs}{\lvert}{\rvert}
\DeclarePairedDelimiter{\norm}{\lVert}{\rVert}
\begin{document}

\title{{Robin eigenvalue ratios: monotonicity on boxes and counterexamples}\thanks{Research supported by NNSF of China (No. 12371110).}}
\author{Guowei Dai\thanks{Corresponding author.
School of Mathematical Sciences, Dalian University of Technology, Dalian, 116024, P.R. China.
E-mail: daiguowei@dlut.edu.cn.},\,\,\, Yingxin Sun\thanks{School of Mathematical Sciences, Dalian University of Technology, Dalian, 116024, P.R. China. E-mail: sunyingxin2023@mail.dlut.edu.cn}\\
}
\date{}
\maketitle

\renewcommand{\abstractname}{Abstract}
\begin{abstract}
We disprove Laugesen's conjecture that the ratio of the first two
Robin eigenvalues decreases with the positive boundary parameter.
On any prescribed compact interval of positive parameters, the ratio
is strictly increasing on a suitable connected polyhedral domain in
every dimension at least two; in the plane, the domain can be a
simply connected polygon. In contrast, we prove strict
decrease on every rectangular box.
Both results follow from a comparison of the relative growth of
interval eigenvalues.
\end{abstract}

\emph{Keywords:} Counterexample; Monotonicity; {Eigenvalue ratios}; Thin passage

{\emph{2020 Mathematics Subject Classification:}} 35P15; 35J25

\section{Introduction and main results}

\quad \, Let $\Omega\subset\R^n$ be a bounded connected Lipschitz domain.
We consider the following Robin eigenvalue problem
\begin{equation*}
\begin{cases}
 -\Delta u=\lambda u\,\,\,&\text{in }\Omega,\\
 \partial_\nu u+\alpha u=0&\text{on }\partial\Omega,
 \end{cases}
\end{equation*}
where $\nu$ is the outward unit normal. The associated quadratic form is
\[
\mathfrak a_{\Omega,\alpha}[u]
 =\int_\Omega \abs{\nabla u}^2\dd x
   +\alpha\int_{\partial\Omega}\abs[\big]{\Tr u}^2\dd S,
 \,\,\, u\in H^1(\Omega).
\]
Here $\Tr u$ denotes the boundary trace of $u$.
We write
$\abs{\Omega}$ for volume and $\abs{\partial\Omega}_{n-1}$ for
boundary measure.
For $\alpha>0$, the eigenvalues, repeated according to multiplicity,
satisfy
\[
 0<\lambda_1(\Omega;\alpha)<\lambda_2(\Omega;\alpha)\le\cdots.
\]
We study the ratio
\[
 \calR_\Omega(\alpha)
 =\frac{\lambda_2(\Omega;\alpha)}{\lambda_1(\Omega;\alpha)},
 \,\,\, \alpha\ne0.
\]
Our main question concerns $\alpha>0$. The distinction between this
case and $\alpha<0$ is explained below; the ratio is undefined at
$\alpha=0$.
For a disconnected open set with finitely many Lipschitz components,
the eigenvalues are obtained by merging the component spectra and
arranging them in nondecreasing order, with multiplicities.

The ratio of the first two eigenvalues measures the separation of
the first two spectral levels relative to the first. Its study goes
back to Payne, P\'olya and Weinberger~\cite{PPW} for the Dirichlet
Laplacian. Their conjecture that the disk maximizes this ratio was
proved, in all dimensions with the ball in place of the disk, by
Ashbaugh and Benguria~\cite{AshbaughBenguria}. For the Robin problem,
Payne and Schaefer~\cite{PayneSchaefer} obtained a planar bound of
three for a range of positive boundary parameters and raised a
corresponding shape optimization question. These problems compare
different domains while the boundary condition is fixed. Our question
is different: the domain is fixed, and only $\alpha$ varies.

The Robin parameter connects the Neumann and Dirichlet problems.
On a fixed connected Lipschitz domain, as $\alpha\downarrow0$ the
first Robin eigenvalue tends to zero and the second tends to the
first positive Neumann eigenvalue. As $\alpha\to+\infty$, they tend
to their Dirichlet counterparts; see, for example,~\cite{Laugesen}.
Thus $\calR_\Omega$ starts at $+\infty$ and has a finite Dirichlet
limit. This makes decrease plausible, but the endpoint limits do
not determine what happens at intermediate parameters. Moreover,
the increase of the individual eigenvalues does not determine the
monotonicity of their ratio: their relative rates of increase must
be compared.

Laugesen~\cite[Conjecture B]{Laugesen} \emph{conjectured that
$\calR_\Omega$ is decreasing on $(0,+\infty)$ for every bounded
Lipschitz domain, and explicitly left the rectangle case unresolved}.
He proved that $\alpha\calR_\Omega(\alpha)$ is increasing for positive
$\alpha$. The increase of the fundamental gap
$\lambda_2-\lambda_1$ was studied by Smits~\cite[Section~4]{Smits}
for positive parameters and proved by
Laugesen~\cite[Theorem~2.1]{Laugesen} on boxes for all real parameters.
Neither of these monotonicity statements decides the ratio for
$\alpha>0$: two increasing positive functions need not have a
decreasing quotient. {The decrease on balls was proved in our separate
preprint~\cite[Theorem~1.3]{DaiSun}.} Freitas and Kennedy~\cite{FreitasKennedy}
studied eigenvalue bounds and shape optimization on rectangles and
their disjoint unions, with the boundary parameter fixed. Here we compare the responses of different spectral levels to a
change in that parameter.

Our first result shows that the conjecture fails.
\begin{theorem}\label{thm:counterexample}
Let $n\ge2$ and let $0<\alpha_-<\alpha_+$ be prescribed. There exists
a bounded connected Lipschitz polyhedral domain $\Omega\subset\R^n$
such that $\calR_\Omega$ is continuously differentiable near
$[\alpha_-,\alpha_+]$ and satisfies $\calR_\Omega'(\alpha)>0$
throughout this interval. In particular,
\begin{equation}\label{eq:reversal}
 \calR_\Omega(\alpha_+)>\calR_\Omega(\alpha_-).
\end{equation}
When $n=2$, $\Omega$ can be chosen to be a simply connected polygon.
\end{theorem}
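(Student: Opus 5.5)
We plan to build $\Omega$ from two boxes joined by a thin passage. The idea is that the disjoint union of two boxes has $\lambda_1$ and $\lambda_2$ coming from \emph{different} components, and that their relative rates of growth in $\alpha$ can be arranged so that the ratio increases. The key one-dimensional input is the interval eigenvalue $\mu_1(L;\alpha)$, the first Robin eigenvalue of $(0,L)$. It solves $\sqrt\mu\tan(\sqrt\mu L/2)=\alpha$, and the box eigenvalues are sums of such interval eigenvalues. By scaling, $\mu_1(L;\alpha)=L^{-2}\mu_1(1;\alpha L)$. The relative growth $\partial_\alpha\log\mu_1(L;\alpha)$ then depends strongly on $L$. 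For a short interval ($\alpha L$ small), $\mu_1\approx 2\alpha/L$ and $\partial_\alpha\log\mu_1\approx 1/\alpha$. For a long interval ($\alpha L$ large), $\mu_1$ is near its Dirichlet value and $\partial_\alpha\log\mu_1$ is of order $1/(\alpha^2L)$, which is much smaller.

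\textbf{Step 1 (disconnected model).} Take $D=A\cup B$ with disjoint boxes $A,B$. The box $A$ is a large cube with side $L$, so $\lambda_1(A;\alpha)$ is small and nearly Dirichlet. The box $B$ is a thin slab, one side $h$ small and the others of size $\ell$, chosen so that for all $\alpha\in[\alpha_-,\alpha_+]$ we have
\[
\lambda_1(A;\alpha)<\lambda_1(B;\alpha)<\lambda_2(A;\alpha).
\]
Then $\calR_D=\lambda_1(B)/\lambda_1(A)$. In the thin direction $\lambda_1(B)\approx 2\alpha/h$, so $\partial_\alpha\log\lambda_1(B)\approx 1/\alpha$, while $\partial_\alpha\log\lambda_1(A)=O(1/(\alpha^2L))$. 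Hence $\calR_D'>0$ on the compact interval once $L\alpha_-\gg1$ and $h\alpha_+\ll1$. The ordering must still be arranged: $\lambda_1(B)\approx 2\alpha/h$ lies between $n\pi^2/L^2$ and $(n+3)\pi^2/L^2$ only if $L^2\alpha/h$ is of order one. This conflicts with $h\ll 1/\alpha$ and $L\gg 1/\alpha$, so this naive choice fails.

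We therefore instead make $B$ the \emph{ground-state} component and $A$ the second one. Take $B$ a large cube, whose $\lambda_1$ is nearly Dirichlet and slowly growing. Take $A$ a box with one thin side $h$ and its other sides large, so that
\[
\lambda_1(A;\alpha)\approx \frac{2\alpha}{h}+\frac{(n-1)\pi^2}{\ell^2}.
\]
Adding a constant through the large sides of $A$ is not the problem, since it keeps $\lambda_1(A)$ above $\lambda_1(B)$. The real difficulty is keeping $\lambda_2(B)$ above $\lambda_1(A)$ on the whole interval. This holds because $\lambda_2(B)-\lambda_1(B)\approx 3\pi^2/L^2$ independently of $\alpha$, while $2\alpha/h$ varies by the factor $\alpha_+/\alpha_-$. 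The remedy is to use the fact that $\lambda_1(A)$ need not sit in a single gap: we only need $\lambda_1(B)<\lambda_1(A)$ and $\lambda_2(B)>\lambda_1(A)$. We achieve this by making $B$ a box with distinct sides, so that its gap can be made large relative to $\lambda_1(B)$. For example, make $B$ thin in $n-1$ directions, of size $b$, with Dirichlet-like behaviour ($\alpha b\gg1$), and long in the remaining one. Then $\lambda_2(B)\approx 4\cdot$ (the transverse part) and the gap ratio is a fixed number near $2$. We then choose $h$ so that $2\alpha/h$ stays within the window: the growth of $\lambda_1(A)$ over $[\alpha_-,\alpha_+]$ is proportional to $\alpha$, so we split $[\alpha_-,\alpha_+]$ into short subintervals. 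Since the theorem is stated for one prescribed interval, it is more economical to first reduce to $\alpha_+/\alpha_-$ close to $1$ by a scaling argument only if needed. We expect this window bookkeeping, which guarantees the sign of $\calR'$ and the correct ordering uniformly in $\alpha$ via the explicit interval formulas, to be the main computational obstacle.

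\textbf{Step 2 (connecting).} Join $A$ and $B$ by a thin polyhedral channel of width $\eps$ to obtain a connected polyhedral domain $\Omega_\eps$; in the plane, a thin rectangle produces a simply connected polygon. We then prove that $\lambda_j(\Omega_\eps;\alpha)\to\lambda_j(D;\alpha)$ for $j=1,2$, together with convergence of the $\alpha$-derivatives, uniformly on a neighbourhood of $[\alpha_-,\alpha_+]$. The limit eigenvalues are simple, so analytic perturbation theory gives $\partial_\alpha\lambda_j=\int_{\partial\Omega}|\Tr u_j|^2$ for the normalized eigenfunction. Convergence of the derivatives therefore follows from $H^1$ convergence of the eigenfunctions plus trace continuity. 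For Robin conditions the channel carries a positive boundary term, so we expect it to contribute a negligible amount. We would prove the convergence via the min–max principle: test functions extended by a cutoff give the upper bound, and restriction to $A\cup B$ gives the lower bound, with the channel contribution controlled because the channel volume and its boundary area tend to $0$. This convergence step, especially the trace convergence on the channel walls uniformly in $\alpha$, is the second delicate point. Strict positivity of $\calR_D'$ on the compact interval then transfers to $\Omega_\eps$ for small $\eps$, which gives the theorem, including \eqref{eq:reversal}.
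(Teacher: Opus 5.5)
There is a genuine gap in Step 1: you never produce a disconnected model $D$ on which $\calR_D'>0$ \emph{and} the spectral ordering hold on the whole interval $[\alpha_-,\alpha_+]$. Your first attempt fails, as you note. The regimes $\alpha h\ll1\ll\alpha L$ that make your asymptotic rate comparison work force $\lambda_1(B)/\lambda_1(A)$ to be of order $(\alpha L)^2/(\alpha h)\gg1$, far outside the window $(1,\lambda_2(A)/\lambda_1(A))$.

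The proposed repair also fails, for three reasons.
\begin{itemize}
\item For a box, $\lambda_2$ is obtained by raising the mode along a \emph{longest} edge, and the interval gap $t^{-2}d(\alpha t)$ is strictly decreasing in $t$ (Lemma~\ref{lem:longest}). So a box that is thin in $n-1$ directions and long in one has gap of order $3\pi^2/\ell^2$ and $\lambda_2/\lambda_1$ close to $1$. Your claim $\lambda_2(B)\approx4\cdot(\text{transverse part})$ is false. In a Dirichlet-like regime, no box beats the cube's $1+3/n$.
\item Splitting $[\alpha_-,\alpha_+]$ into subintervals does not yield a single domain.
\item Scaling cannot shrink $\alpha_+/\alpha_-$. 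Since $\lambda_k(t\Omega;\alpha)=t^{-2}\lambda_k(\Omega;t\alpha)$, you get $\calR_{t\Omega}(\alpha)=\calR_\Omega(t\alpha)$, which only translates $\log\alpha$ and leaves $\alpha_+/\alpha_-$ invariant.
\end{itemize}

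The missing idea is that the rate comparison holds \emph{exactly}, not just asymptotically. In your notation, $\alpha\,\partial_\alpha\log\mu_1(L;\alpha)$ is a strictly decreasing function of $\alpha L$ on all of $(0,\infty)$. In the paper this is $E(a)=ap'(a)/p(a)=2/\bigl(1+2x(a)/\sin 2x(a)\bigr)$, which decreases because $u/\sin u$ increases on $(0,\pi)$.

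Hence for cubes of half-sides $1$ and $s<1$, the ratio $R_s(\alpha)=s^{-2}p(s\alpha)/p(\alpha)$ satisfies $\alpha R_s'/R_s=E(s\alpha)-E(\alpha)>0$ for every $\alpha>0$, however close $s$ is to $1$. Taking $s$ close to $1$ then settles the ordering for all $\alpha>0$ at once. Indeed, $R_s$ increases from $1/s$ to $1/s^2$, and $q/p>4$ gives $\lambda_2(Q_1)/\lambda_1(Q_1)>1+3/n>1/s^2$ once $s^2>n/(n+3)$. No window bookkeeping is needed: a small size difference suffices, and the extreme regimes you aimed for are exactly what destroys the ordering.

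Step 2 also needs adjustment in two places.
\begin{itemize}
\item Small channel volume and area are not the mechanism for the lower bound. For $n=2$ the lateral length does not tend to $0$, and for $\alpha<0$ channel modes tend to $-\infty$. What works is splitting into three pieces with Neumann cut faces. The channel's lowest eigenvalue is then $(n-1)\eps^{-2}p(\alpha\eps)\sim(n-1)\alpha/\eps\to+\infty$, and removing the Robin term on the vanishing attachment windows has negligible effect.
\item For the derivatives, the paper avoids your trace-convergence issue on the channel walls. It uses concavity of $\lambda_1$ and $\lambda_1+\lambda_2$ in $\alpha$ together with their uniform convergence.
\end{itemize}
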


The domain depends on the prescribed interval. Its ratio is monotone
in neither direction on $(0,+\infty)$: it increases throughout that
interval but tends to $+\infty$ as $\alpha\downarrow0$; see
Remark~\ref{rem:nonmonotone}.
Our second result proves the conjecture for every rectangular box.
The same parameter $\alpha$ is imposed on all faces.

\begin{theorem}\label{thm:boxes}
Let $n\ge1$ and $B=\prod_{j=1}^n(-t_j,t_j)$ with $t_j>0$.
Then $\calR_B$ is differentiable on $(0,+\infty)$ and $\calR_B'(\alpha)<0$.
Writing $T=\max_{1\le j\le n}t_j$, its endpoint limits are
\begin{equation}\label{eq:boxlimits}
 \lim_{\alpha\downarrow0}\calR_B(\alpha)=+\infty,
 \,\,\,
 \lim_{\alpha\to+\infty}\calR_B(\alpha)
 =1+\frac{3/T^2}{\sum_{j=1}^n t_j^{-2}}.
\end{equation}
The conclusion includes boxes whose longest edge is not unique.
\end{theorem}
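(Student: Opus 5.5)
The plan is to separate variables and reduce everything to one-dimensional Robin problems on intervals. For an interval $(-t,t)$ let $\mu_k(t;\alpha)$ denote its Robin eigenvalues. Every eigenvalue of $B$ is then a sum $\sum_j \mu_{k_j}(t_j;\alpha)$, so
\[
\lambda_1(B;\alpha)=\sum_{j=1}^n\mu_1(t_j;\alpha),\qquad
\lambda_2(B;\alpha)=\lambda_1(B;\alpha)+\min_j g(t_j;\alpha),\qquad g:=\mu_2-\mu_1 .
\]
Scaling gives $\mu_k(t;\alpha)=t^{-2}m_k(\alpha t)$, where $m_k(\beta)$ are the eigenvalues on $(-1,1)$ with parameter $\beta$. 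Explicitly, $m_1=k_1^2$ with $k_1\tan k_1=\beta$ and $k_1\in(0,\pi/2)$, and $m_2=k_2^2$ with $-k_2\cot k_2=\beta$ and $k_2\in(\pi/2,\pi)$. These roots are simple and depend analytically on $\beta>0$ by the implicit function theorem. Hence $\calR_B$ is differentiable, even in the case where $\min_j g(t_j;\alpha)$ is attained by several equal $t_j$.

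\textbf{Step 1 (the gap is minimal on the longest edge).} I will show that $t\mapsto g(t;\alpha)$ is nonincreasing, so that $\min_j g(t_j;\alpha)=g(T;\alpha)$. Writing $g=t^{-2}G(\alpha t)$ with $G=m_2-m_1$, this is equivalent to $\beta G'(\beta)\le 2G(\beta)$. Equivalently, this is the monotonicity of $G(\beta)/\beta^2$. It should follow from the explicit formulas $m_k'=2m_k/(m_k+\beta^2+\beta)$, obtained by implicit differentiation (this is the Hadamard-type formula $\partial_\beta m=|u(\pm1)|^2/\|u\|^2$ evaluated for the explicit cosine and sine eigenfunctions). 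With Step 1 in hand,
\[
\calR_B(\alpha)=1+\frac{g(T;\alpha)}{\sum_j\mu_1(t_j;\alpha)} .
\]

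\textbf{Step 2 (reduction to relative growth).} By this formula, $\calR_B'<0$ is equivalent to $\partial_\alpha\log g(T;\alpha)<\partial_\alpha\log\sum_j\mu_1(t_j;\alpha)$. By the mediant inequality, it suffices to prove
\[
\frac{\partial_\alpha g(T;\alpha)}{g(T;\alpha)}<\frac{\partial_\alpha\mu_1(t_j;\alpha)}{\mu_1(t_j;\alpha)}\quad\text{for every } j.
\]
Set $\beta=\alpha t$. By scaling, $\alpha\,\partial_\alpha\log\mu_1(t;\alpha)=E_1(\alpha t)$, where $E_1(\beta)=\beta m_1'(\beta)/m_1(\beta)$ is the elasticity of $m_1$; similarly $\alpha\,\partial_\alpha\log g(T;\alpha)=E_G(\alpha T)$. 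Since $\alpha t_j\le\alpha T$, the desired inequality follows from two one-variable facts:
\begin{enumerate}
\item $E_1$ is nonincreasing on $(0,\infty)$. It should decrease from $1$ at $\beta\downarrow0$ to $0$ at $\beta\to\infty$.
\item $E_G(\beta)<E_1(\beta)$ for every $\beta>0$.
\end{enumerate}

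\textbf{Main obstacle.} I expect fact (ii), together with the elementary monotonicity check in Step 1, to be the main obstacle. These are concrete inequalities in the transcendental variables $k_1\in(0,\pi/2)$ and $k_2\in(\pi/2,\pi)$, coupled through $\beta$. Using $m_k'=2m_k/(m_k+\beta^2+\beta)$, fact (ii) reads
\[
\frac{m_2'-m_1'}{m_2-m_1}<\frac{m_1'}{m_1},
\]
which rearranges to $m_2'/m_1'<m_2/m_1$. This becomes $m_2+\beta^2+\beta>m_1+\beta^2+\beta$, which is true because $m_2>m_1$. So (ii) may in fact be the easy part. Fact (i) then reduces to showing that $2\beta/(k_1^2+\beta^2+\beta)$ is decreasing. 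I would verify this by differentiating and using $k_1'=\dots$ from $k_1\tan k_1=\beta$, and if necessary by parametrising everything in terms of $k_1$.

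\textbf{Limits.} As $\alpha\downarrow0$ we have $\mu_1\to0$ while $g(T;\alpha)\to\pi^2/(4T^2)$, so $\calR_B\to+\infty$. As $\alpha\to\infty$ the interval eigenvalues tend to their Dirichlet values $\mu_1\to\pi^2/(4t^2)$ and $\mu_2\to\pi^2/t^2$. Hence $g(T;\alpha)\to3\pi^2/(4T^2)$, which yields \eqref{eq:boxlimits}.
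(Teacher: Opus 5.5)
Your plan follows the paper's proof essentially step for step: the longest-edge formula for the gap, the convex-combination bound $\alpha\,\partial_\alpha\log\sum_j\mu_1(t_j;\alpha)\ge E_1(\alpha T)$, the strict inequality $E_G=E_1\,(\beta+\beta^2)/(m_2+\beta+\beta^2)<E_1$ (your reduction to $m_2/m_1$ being decreasing is the same computation), and the same endpoint limits. The one piece you leave unexecuted, fact (i), carries the real content, and it closes by the parametrisation in $k_1$ that you suggest. Substituting $\beta=k_1\tan k_1$ gives $E_1=2/(1+k_1\tan k_1+k_1\cot k_1)=2/\bigl(1+2k_1/\sin(2k_1)\bigr)$, and $u/\sin u$ is increasing on $(0,\pi)$ because $\sin u-u\cos u=\int_0^u v\sin v\dd v>0$. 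Step 1 then needs nothing further, since $E_G<E_1=2\beta/(m_1+\beta+\beta^2)<2$ is exactly $\beta G'<2G$.
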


For negative parameters, the box ratio is also strictly decreasing,
from $1$ at $-\infty$ to $-\infty$ at $0^-$; see
Proposition~\ref{prop:negative-boxes}. This follows from Laugesen's
gap theorem, whereas positive parameters require the additional
relative-growth comparison below. The end of
Section~\ref{sec:counterexample} explains the different effect of
negative parameters on the shrinking passage.

The box proof requires a comparison of relative growth rates.
Writing $L=\lambda_1$ and $G=\lambda_2-\lambda_1$, we have
\begin{equation}\label{eq:relativecriterion}
 \calR'=\frac{G}{L}\left(\frac{G'}{G}-\frac{L'}{L}\right).
\end{equation}
Thus, for $\alpha>0$, we need $G'/G<L'/L$; knowing $G'>0$ is not
enough. The difficulty is that $L$ combines interval eigenvalues at
different scales. Let $p(a)$ and $q(a)$ be the first two eigenvalues
of $(-1,1)$ for $a>0$, put $d=q-p$, and define
\[
 E(a)=\frac{ap'(a)}{p(a)}.
\]
We prove that $E$ is strictly decreasing and $ad'(a)/d(a)<E(a)$.
For a box, put $L_j=t_j^{-2}p(\alpha t_j)$ and $T=\max_j t_j$.
Then $L=\sum_jL_j$, and the gap is $G=T^{-2}d(\alpha T)$
\cite[Corollary~6.3]{Laugesen}. Since $t_j\le T$ and the positive
weights $L_j/L$ sum to one,
\[
 \alpha\frac{G'}{G}
 <E(\alpha T)
 \le\sum_{j=1}^n\frac{L_j}{L}E(\alpha t_j)
 =\alpha\frac{L'}{L}.
\]
The decrease of $E$ handles the different side lengths, while the
first strict inequality gives $\calR'<0$.

For the counterexample, take two cubes of side lengths $2$ and
$2s$, with $s<1$. Their first-eigenvalue ratio satisfies
\[
 R_s(\alpha)=s^{-2}\frac{p(s\alpha)}{p(\alpha)},
 \,\,\,
 \alpha\frac{R_s'(\alpha)}{R_s(\alpha)}
 =E(s\alpha)-E(\alpha)>0.
\]
Choosing $s>\sqrt{n/(n+3)}$ ensures that the smaller cube's first
eigenvalue lies below the larger cube's second eigenvalue. Thus the
two first eigenvalues are the first two eigenvalues of the union,
and their ratio increases.
We join the cubes by a thin passage. For fixed positive $\alpha$,
the lowest eigenvalue of the separated passage tends to $+\infty$.
Matching variational bounds then give convergence to the union's
spectrum. To preserve strict increase on a whole compact interval,
we use the concavity of $\lambda_1$ and $\lambda_1+\lambda_2$ to
obtain uniform convergence of their parameter derivatives.

The same decreasing function $E$ therefore explains both results:
on boxes it compares the gap with a sum of first eigenvalues;
on the two cubes it compares first eigenvalues at different scales.
The interval formulas and domain perturbation methods are
standard~\cite{Laugesen,DancerDaners,Daners}. The relative-growth
comparison and the choice of unequal components turn these tools
into the box monotonicity theorem and connected counterexamples.

The rest of this paper is arranged as follows. Section~\ref{sec:interval} gives the interval calculations.
Section~\ref{sec:boxes} proves the box theorem and discusses negative
parameters on boxes. Section~\ref{sec:counterexample} constructs
the connected counterexamples and proves their spectral convergence;
it also explains why the passage argument changes for negative
parameters and ends with the convex-domain question.

\section{Interval eigenvalues and logarithmic derivatives}
\label{sec:interval}

\quad\, We keep track of two separate comparisons: how the first eigenvalue
responds at different interval lengths, and how the gap responds
relative to the first eigenvalue on a single interval.

For $a>0$, let $x(a)\in(0,\pi/2)$ and $y(a)\in(\pi/2,\pi)$ be the
unique solutions of
\begin{equation}\label{eq:roots}
 x\tan x=a,\,\,\, -y\cot y=a.
\end{equation}
The functions $x\mapsto x\tan x$ and $y\mapsto-y\cot y$ are strictly
increasing from zero to $+\infty$ on the indicated intervals.
Their derivatives are positive, so their inverses are smooth.
Set
\[
 p(a)=x(a)^2,\,\,\, q(a)=y(a)^2,\,\,\,
 c(a)=a+a^2,\,\,\, d(a)=q(a)-p(a).
\]
On $I_t=(-t,t)$, the first eigenfunction is proportional to
$\cos\bigl(x(\alpha t)z/t\bigr)$ and the second to
$\sin\bigl(y(\alpha t)z/t\bigr)$.
The boundary condition at $z=t$ gives the two equations in
\eqref{eq:roots}, with $a=\alpha t$, and hence
\[
 \lambda_1(I_t;\alpha)=t^{-2}p(\alpha t),\,\,\,
 \lambda_2(I_t;\alpha)=t^{-2}q(\alpha t).
\]
These standard interval formulas are also given in~\cite{Laugesen}.

\begin{lemma}\label{lem:derivatives}
For $a>0$,
\begin{equation}\label{eq:derivatives}
 p'(a)=\frac{2p(a)}{p(a)+c(a)},\,\,\,
 q'(a)=\frac{2q(a)}{q(a)+c(a)}.
\end{equation}
The function
\begin{equation}\label{eq:elasticity}
 E(a)=\frac{ap'(a)}{p(a)}
 =\frac{2}{1+2x(a)/\sin\bigl(2x(a)\bigr)}
\end{equation}
is strictly decreasing and satisfies $0<E(a)<1$. Moreover,
\begin{equation}\label{eq:gapelasticity}
 \frac{ad'(a)}{d(a)}
 =E(a)\frac{c(a)}{q(a)+c(a)}<E(a),
\end{equation}
and $q(a)/p(a)$ is strictly decreasing, with
\begin{equation}\label{eq:qpbound}
 \frac{q(a)}{p(a)}>4,
 \,\,\, \lim_{a\to+\infty}\frac{q(a)}{p(a)}=4.
\end{equation}
\end{lemma}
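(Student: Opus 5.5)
Throughout, $a>0$, $x=x(a)$, $y=y(a)$, and we use the defining relations $x\tan x=a$ and $-y\cot y=a$.

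\emph{Derivative formulas.} Differentiate $x\tan x=a$ implicitly to get $x'(\tan x+x\sec^2x)=1$. Since $\tan x=a/x$, we have $\sec^2x=1+a^2/x^2$, so
\[
\tan x+x\sec^2x=\frac{a}{x}+x+\frac{a^2}{x}=\frac{x^2+a+a^2}{x}=\frac{p+c}{x}.
\]
Hence $p'=2xx'=2x^2/(p+c)=2p/(p+c)$. For $y$, differentiate $-y\cot y=a$. Using $\cot y=-a/y$ and $\csc^2y=1+a^2/y^2$, the derivative of the left side is $-\cot y+y\csc^2y=(y^2+a+a^2)/y$. This gives $q'=2q/(q+c)$.

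\emph{Formula and bounds for $E$.} From the above, $E=2a/(p+c)$. We rewrite $(p+c)/a=1+(x^2+a^2)/a$. Since $a=x\tan x$, we get $x^2+a^2=x^2\sec^2x$, and so
\[
\frac{x^2+a^2}{a}=\frac{x}{\sin x\cos x}=\frac{2x}{\sin 2x}.
\]
This yields the closed form \eqref{eq:elasticity}. On $(0,\pi)$ we have $2x/\sin2x>1$, hence $0<E<1$. The function $\theta\mapsto\theta/\sin\theta$ is strictly increasing on $(0,\pi)$, because its derivative has numerator $\sin\theta-\theta\cos\theta>0$. Since $x(a)$ is strictly increasing, $E$ is strictly decreasing.

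\emph{Gap elasticity.} Using the two derivative formulas,
\[
d'=\frac{2q}{q+c}-\frac{2p}{p+c}=\frac{2c(q-p)}{(q+c)(p+c)}.
\]
Therefore
\[
\frac{ad'}{d}=\frac{2a}{p+c}\cdot\frac{c}{q+c}=E\,\frac{c}{q+c}.
\]
Since $q>0$, the factor $c/(q+c)$ lies strictly below $1$, which proves \eqref{eq:gapelasticity}.

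\emph{The ratio $q/p$.} Its logarithmic derivative is
\[
\frac{q'}{q}-\frac{p'}{p}=\frac{2}{q+c}-\frac{2}{p+c}<0,
\]
because $q>p$. So $q/p$ is strictly decreasing. As $a\to\infty$ we have $x\to\pi/2$ and $y\to\pi$, so $q/p\to\pi^2/(\pi^2/4)=4$. Strict decrease toward the limit $4$ then gives $q/p>4$ for every finite $a$. This uses only $q>p$, which holds since $y>\pi/2>x$.

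Every step here is a direct calculation. The only one needing care is the simplification of $(p+c)/a$ to $1+2x/\sin2x$, which relies on the identity $x^2+a^2=x^2\sec^2x$; I expect this to be the main point where a slip could occur.
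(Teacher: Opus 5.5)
Your proposal is correct and follows the paper's proof essentially step for step. You use implicit differentiation of the root equations, then the closed form for $E$; you reach it via $x^2+a^2=x^2\sec^2x$, while the paper writes $a+p/a=x\tan x+x\cot x$, which amounts to the same identity. The monotonicity of $\theta/\sin\theta$ and the logarithmic-derivative computations for $d$ and $q/p$ also match. The one claim you leave unjustified, $\sin\theta-\theta\cos\theta>0$ on $(0,\pi)$, the paper settles by writing it as $\int_0^\theta v\sin v\dd v$.
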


\begin{proof}
In the following calculations, $x=x(a)$, $y=y(a)$, $p=p(a)$,
$q=q(a)$ and $c=c(a)$.
Using $\tan x=a/x$ and $\sec^2x=1+\tan^2x$, we obtain
\[
 \frac{\mathrm da}{\mathrm dx}
 =\tan x+x\sec^2x
 =\frac{a}{x}+x+\frac{a^2}{x}
 =\frac{p+c}{x}.
\]
Similarly, $\cot y=-a/y$ gives
\[
 \frac{\mathrm da}{\mathrm dy}
 =-\cot y+y\csc^2y
 =\frac{a}{y}+y+\frac{a^2}{y}
 =\frac{q+c}{y}.
\]
Since $p'=2x x'$ and $q'=2y y'$, these identities prove
\eqref{eq:derivatives}. Substituting $a=x\tan x$ then yields
\[
 E(a)=\frac{2a}{p+c}
 =\frac{2}{1+a+p/a}
 =\frac{2}{1+x\tan x+x\cot x}
 =\frac{2}{1+2x/\sin(2x)}.
\]
For $0<u<\pi$,
\[
 \frac{\mathrm d}{\mathrm du}\left(\frac{u}{\sin u}\right)
 =\frac{\sin u-u\cos u}{\sin^2u},
 \,\,\,
 \sin u-u\cos u=\int_0^u v\sin v\dd v>0.
\]
Thus $u/\sin u$ is strictly increasing. Since $x(a)$ is increasing,
\eqref{eq:elasticity} shows that $E$ is strictly decreasing.
The inequalities $0<\sin(2x)<2x$ give $0<E<1$.
In particular, for $0<t<T$ and fixed $\alpha>0$,
\[
 \alpha\frac{\partial}{\partial\alpha}
       \log\lambda_1(I_t;\alpha)
 =E(\alpha t)>E(\alpha T)
 =\alpha\frac{\partial}{\partial\alpha}
       \log\lambda_1(I_T;\alpha).
\]
This is the comparison between different scales.

Subtracting the two identities in \eqref{eq:derivatives}, we obtain
\[
 \begin{aligned}
 d'
 &=\frac{2q}{q+c}-\frac{2p}{p+c}\\
 &=\frac{2q(p+c)-2p(q+c)}{(p+c)(q+c)}\\
 &=\frac{2c(q-p)}{(p+c)(q+c)}.
 \end{aligned}
\]
Dividing by $d=q-p>0$ and multiplying by $a$ gives
\[
 \frac{ad'}{d}
 =\frac{2a}{p+c}\,\frac{c}{q+c}
 =E(a)\frac{c}{q+c}<E(a),
\]
which proves \eqref{eq:gapelasticity}. The factor $c/(q+c)$ lies
strictly between zero and one. It is precisely the strict loss in
relative growth that will make the box argument work. Also,
\[
 \frac{\mathrm d}{\mathrm da}\log\left(\frac{q}{p}\right)
 =\frac{q'}q-\frac{p'}p
 =\frac{2}{q+c}-\frac{2}{p+c}
 =-\frac{2(q-p)}{(p+c)(q+c)}<0.
\]
Finally, \eqref{eq:roots} implies $x(a)\to\pi/2$ and
$y(a)\to\pi$ as $a\to+\infty$. Hence $q(a)/p(a)\to4$.
A strictly decreasing function with this limit is strictly greater
than four at every finite positive $a$, proving \eqref{eq:qpbound}.
\end{proof}

We will also use the limits
\begin{equation}\label{eq:plimits}
 \begin{gathered}
 p(a)\sim a\quad(a\downarrow0),\,\,\,
 q(a)\longrightarrow\frac{\pi^2}{4}\quad(a\downarrow0),\\
 p(a)\longrightarrow\frac{\pi^2}{4},\,\,\,
 q(a)\longrightarrow\pi^2\quad(a\to+\infty).
 \end{gathered}
\end{equation}
Indeed, $p(a)/a=x/\tan x\to1$ as $a\downarrow0$, while the
remaining limits follow from the endpoints in \eqref{eq:roots}.

\section{Monotonicity on rectangular boxes}
\label{sec:boxes}

\subsection{Positive parameters}

\quad\, By separation of variables, each eigenvalue of a box is a
sum of interval eigenvalues. The box gap is therefore the smallest
interval gap among its edges. The next lemma shows that this minimum
is attained on a longest edge, recovering the formula in~\cite{Laugesen}.

\begin{lemma}\label{lem:longest}
Fix $\alpha>0$. The interval gap
\[
 \delta_\alpha(t)=t^{-2}d(\alpha t)
\]
is strictly decreasing for $t>0$. Consequently, for the box in
Theorem~\ref{thm:boxes},
\begin{equation}\label{eq:boxgap}
 \begin{aligned}
 \lambda_1(B;\alpha)=\sum_{j=1}^n t_j^{-2}p(\alpha t_j),\,\,\,
 \lambda_2(B;\alpha)-\lambda_1(B;\alpha)=T^{-2}d(\alpha T).
 \end{aligned}
\end{equation}
\end{lemma}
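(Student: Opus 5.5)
We differentiate $\delta_\alpha(t)=t^{-2}d(\alpha t)$ directly in $t$ and apply the gap elasticity from Lemma~\ref{lem:derivatives}. With $a=\alpha t$,
\[
 t\,\frac{\partial}{\partial t}\log\delta_\alpha(t)
 =-2+\frac{a\,d'(a)}{d(a)}
 =-2+E(a)\frac{c(a)}{q(a)+c(a)}.
\]
By \eqref{eq:gapelasticity} and $0<E<1$, the right-hand side is less than $-1<0$. Since $d>0$, the gap $\delta_\alpha$ is strictly decreasing. This first step is routine; the only input is the derivative bound already proved.

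Next we describe the box spectrum by separation of variables. The eigenfunctions of the box are the products $\prod_j\phi^{(j)}_{k_j}(z_j)$ of Robin interval eigenfunctions on $(-t_j,t_j)$, each with the same parameter $\alpha$. These products are complete in $L^2(B)$, since each interval family is complete. Hence the eigenvalues are exactly the sums $\sum_j\lambda_{k_j}(I_{t_j};\alpha)$, with multiplicities.

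The smallest sum takes $k_j=1$ for every $j$, which gives the formula for $\lambda_1(B;\alpha)$ in \eqref{eq:boxgap}.

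For $\lambda_2(B;\alpha)$, consider any admissible index tuple other than $(1,\dots,1)$. It has some $k_i\ge2$, so its sum is at least
\[
 \lambda_1(B;\alpha)+\bigl(\lambda_{k_i}(I_{t_i};\alpha)-\lambda_1(I_{t_i};\alpha)\bigr)
 \ge\lambda_1(B;\alpha)+\delta_\alpha(t_i).
\]
This uses that the interval eigenvalues are ordered, so $\lambda_{k_i}\ge\lambda_2$ on $I_{t_i}$. Because $\delta_\alpha$ is decreasing, $\delta_\alpha(t_i)\ge\delta_\alpha(T)$. Equality is attained by raising a single longest edge $j_0$ (any $j_0$ with $t_{j_0}=T$) from $k=1$ to $k=2$. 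Therefore $\lambda_2-\lambda_1=T^{-2}d(\alpha T)$. This remains true when the longest edge is not unique; only the multiplicity of $\lambda_2$ changes.

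The one point needing care is the completeness and exhaustiveness of the product eigenbasis for the Robin form on the box. This holds because the form $\mathfrak a_{B,\alpha}$ splits as a sum of one-dimensional forms, one per face pair, so the operator is a tensor-sum of interval operators. That tensor-sum structure is standard, and we cite it as in \cite{Laugesen}.
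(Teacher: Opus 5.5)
Your proposal is correct and follows the paper's proof essentially step for step. You bound the logarithmic derivative by $-2+E\,c/(q+c)<-1$ using \eqref{eq:gapelasticity}. You then use separation of variables to show that any index tuple other than $(1,\dots,1)$ raises some coordinate by at least $\delta_\alpha(t_i)\ge\delta_\alpha(T)$, with equality when a single longest edge is raised. Your explicit lower-bound chain is just a slightly more detailed version of the paper's ``smallest possible increase'' sentence.
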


\begin{proof}
Differentiation and \eqref{eq:gapelasticity} give
\[
 \frac{t\delta_\alpha'(t)}{\delta_\alpha(t)}
 =-2+\frac{\alpha t\,d'(\alpha t)}{d(\alpha t)}
 <-2+E(\alpha t)<-1.
\]
In particular, $\delta_\alpha'(t)<0$.
By separation of variables, the eigenvalues of $B$ are all sums
\[
 \sum_{j=1}^n\lambda_{m_j}(I_{t_j};\alpha),
 \,\,\, m_j\ge1.
\]
Taking every $m_j=1$ gives the first eigenvalue. In any other sum,
at least one coordinate contributes at least its second eigenvalue.
The smallest possible increase is attained by changing only one
coordinate from its first to its second eigenvalue. Therefore
\[
 \lambda_2(B;\alpha)-\lambda_1(B;\alpha)
 =\min_{1\le j\le n}\delta_\alpha(t_j)
 =\delta_\alpha(T),
\]
where the final equality uses the strict decrease just proved.
This establishes \eqref{eq:boxgap}.
\end{proof}

Now we present the argument of Theorem \ref{thm:boxes}.

\begin{proof}[Proof of Theorem~\ref{thm:boxes}]
By Lemma~\ref{lem:longest}, the gap has a single expression determined
by $T$, independently of which longest edge is selected. Write
\[
 L_j(\alpha)=t_j^{-2}p(\alpha t_j),\,\,\,
 L(\alpha)=\sum_{j=1}^n L_j(\alpha),\,\,\,
 G(\alpha)=T^{-2}d(\alpha T).
\]
All these functions are positive and smooth for $\alpha>0$.
Since $L_j'=t_j^{-1}p'(\alpha t_j)$, we have
\[
 \alpha\frac{L_j'(\alpha)}{L_j(\alpha)}
 =\frac{\alpha t_j p'(\alpha t_j)}{p(\alpha t_j)}
 =E(\alpha t_j).
\]
The numbers $L_j/L$ are positive and sum to one. The decrease of
$E$, together with $t_j\le T$, therefore gives
\begin{equation}\label{eq:weighted}
 \alpha\frac{L'(\alpha)}{L(\alpha)}
 =\sum_{j=1}^n\frac{L_j(\alpha)}{L(\alpha)}E(\alpha t_j)
 \ge E(\alpha T).
\end{equation}
On the other hand, \eqref{eq:gapelasticity} gives
\begin{equation}\label{eq:strictgap}
 \alpha\frac{G'(\alpha)}{G(\alpha)}
 =E(\alpha T)\frac{c(\alpha T)}{q(\alpha T)+c(\alpha T)}
 <E(\alpha T).
\end{equation}
Subtracting \eqref{eq:weighted} from \eqref{eq:strictgap}, and using
$\alpha>0$, yields
\[
 \frac{\mathrm d}{\mathrm d\alpha}
 \log\left(\frac{G(\alpha)}{L(\alpha)}\right)
 =\frac{G'(\alpha)}{G(\alpha)}
  -\frac{L'(\alpha)}{L(\alpha)}<0.
\]
Using \eqref{eq:relativecriterion}, it follows that
\[
 \calR_B'(\alpha)
 =\frac{G(\alpha)}{L(\alpha)}
  \left(\frac{G'(\alpha)}{G(\alpha)}
       -\frac{L'(\alpha)}{L(\alpha)}\right)<0.
\]
If several edges have length $2T$, each gives the same smooth
expression $T^{-2}d(\alpha T)$ for the gap. Thus a possible
multiplicity of $\lambda_2$ causes no loss of differentiability.
The strict inequality \eqref{eq:strictgap} also applies when every
edge has the same length.

As $\alpha\downarrow0$, \eqref{eq:plimits} gives
\[
 L(\alpha)\sim\alpha\sum_{j=1}^n t_j^{-1},\,\,\,
 G(\alpha)\longrightarrow\frac{\pi^2}{4T^2}.
\]
Hence $G/L\to+\infty$. As $\alpha\to+\infty$,
\[
 L(\alpha)\longrightarrow\frac{\pi^2}{4}\sum_{j=1}^n t_j^{-2},
 \,\,\, G(\alpha)\longrightarrow\frac{3\pi^2}{4T^2}.
\]
Substitution into $\calR_B=1+G/L$ proves \eqref{eq:boxlimits}.
\end{proof}

\begin{remark}
The same comparison gives the explicit estimate
\[
 \frac{\mathrm d}{\mathrm d\alpha}
 \log\bigl(\calR_B(\alpha)-1\bigr)
 \le-\frac{E(\alpha T)}{\alpha}
       \frac{q(\alpha T)}{q(\alpha T)+c(\alpha T)}<0.
\]
Thus it is the relative growth of the gap, rather than the sign of
its derivative alone, that determines the decrease of the ratio.
\end{remark}

\subsection{Negative parameters on boxes}

\quad\, The sign of the first eigenvalue changes the comparison. For
$\alpha<0$, an increasing gap is sufficient for a decreasing signed
ratio, unlike the positive-parameter case. The following consequence
of Laugesen's gap theorem is included to make this distinction
explicit.

\begin{proposition}
\label{prop:negative-boxes}
For the box $B$ in Theorem~\ref{thm:boxes}, the function
$\calR_B$ is differentiable on $(-\infty,0)$ and satisfies
\[
 \calR_B'(\alpha)<0\,\,\,(\alpha<0),
 \,\,\,
 \lim_{\alpha\to-\infty}\calR_B(\alpha)=1,
 \,\,\,
 \lim_{\alpha\uparrow0}\calR_B(\alpha)=-\infty.
\]
There is a unique $\alpha_0<0$ such that
$\lambda_2(B;\alpha_0)=0$. The ratio belongs to $(0,1)$ for
$\alpha<\alpha_0$ and is negative for $\alpha_0<\alpha<0$.
\end{proposition}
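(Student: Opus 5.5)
For $\alpha<0$, write $L=\lambda_1(B;\alpha)$ and $G=\lambda_2(B;\alpha)-\lambda_1(B;\alpha)$. The ratio is $\calR_B=1+G/L$, and \eqref{eq:relativecriterion} gives $\calR_B'=(G'L-GL')/L^2$. I will show $G>0$, $G'>0$, $L<0$ and $L'>0$. Then $G'L<0$ and $-GL'<0$, so $\calR_B'<0$ with no comparison of relative growth rates needed.

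The first step is to justify the separated-variable structure for negative $\alpha$. On $I_t$ the first eigenfunction is $\cosh(\xi z/t)$ with $\xi\tanh\xi=-\alpha t$, so $\lambda_1(I_t;\alpha)=-t^{-2}\xi^2<0$. The second eigenfunction is $\sinh(\eta z/t)$ with $\eta\coth\eta=-\alpha t$ when $-\alpha t>1$, and a $\sin$-branch otherwise; this is the standard list from~\cite{Laugesen}. By separation of variables, as in Lemma~\ref{lem:longest}, we get $L=\sum_j\lambda_1(I_{t_j};\alpha)<0$ and $G=\min_j[\lambda_2-\lambda_1](I_{t_j};\alpha)>0$. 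For the gap I will invoke Laugesen's gap theorem~\cite[Theorem~2.1]{Laugesen}: on boxes the gap is strictly increasing in $\alpha\in\R$ and is given by the longest-edge formula. Differentiability follows because each one-dimensional eigenvalue is a smooth simple eigenvalue branch. For $\lambda_2$ a longest edge is selected, and ties give the same expression, as in the positive case. Finally, $L'>0$ by the Hellmann--Feynman formula $\lambda_1'=\int_{\partial B}u^2/\int_B u^2>0$, or directly from the implicit equation.

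The second step is the limits. As $\alpha\uparrow0$, $L\to0^-$ while $G\to\mu_2^N>0$, the first positive Neumann eigenvalue, so $G/L\to-\infty$. As $\alpha\to-\infty$, the interval asymptotics $\xi,\eta\sim -\alpha t$ with exponentially small difference $\xi^2-\eta^2=O(\alpha^2e^{-2|\alpha| t})$ give $G$ decaying exponentially while $|L|\sim\alpha^2 n$. Hence $G/L\to0$ and $\calR_B\to1$.

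The last step is the sign statement. Here $\lambda_2=L+G$ is continuous and strictly increasing, since $L'>0$ and $G'>0$. It tends to $\mu_2^N>0$ as $\alpha\uparrow0$, and to $-\infty$ as $\alpha\to-\infty$ because $G$ is bounded and $L\to-\infty$. So it has a unique zero $\alpha_0$. Since $L<0$, the ratio $\lambda_2/L$ lies in $(0,1)$ when $\lambda_2<0$, because $\lambda_2>L$, and is negative when $\lambda_2>0$. The only step I expect to take care is the $\alpha\to-\infty$ asymptotics of the interval gap, which need the $\tanh/\coth$ equations estimated carefully. Otherwise everything is sign bookkeeping on top of the gap theorem.
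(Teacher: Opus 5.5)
Your proposal is correct and follows essentially the same route as the paper: write $\calR_B'=(G'L-GL')/L^2$, take $G'\ge0$ from Laugesen's gap theorem, and use $L<0$ and $L'>0$ (Hellmann--Feynman), so that $-GL'<0$ alone forces the sign and no relative-growth estimate is needed. The differences are minor and do not change the argument: you get $L<0$ from the explicit $\cosh$ branch rather than from the constant trial function, you read off the sign pattern from the monotonicity of $\lambda_2=L+G$ rather than from the monotone ratio and its limits, and your exponential asymptotics at $-\infty$ are unnecessary, since a positive nondecreasing $G$ is bounded there while $L\to-\infty$.
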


\begin{proof}
Write $L(\alpha)=\lambda_1(B;\alpha)$ and
$G(\alpha)=\lambda_2(B;\alpha)-\lambda_1(B;\alpha)$.
The longest-edge formula holds for all real parameters
\cite[Corollary~6.3]{Laugesen}, so the smooth dependence of interval
eigenvalues gives smooth functions $L$ and $G$, including at
parameters where $\lambda_2(B;\alpha)=0$. The gap theorem
\cite[Theorem~2.1]{Laugesen} gives $G'\ge0$ and
$G(\alpha)\to0$ as $\alpha\to-\infty$.

The constant trial function gives
\[
 L(\alpha)\le
 \alpha\frac{\abs{\partial B}_{n-1}}{\abs{B}}<0.
\]
Let $u_1(\alpha)$ be a real, $L^2(B)$-normalized first eigenfunction.
Differentiating the form along $u_1$, and using its eigenfunction
equation and the derivative of its normalization, yields
\[
 \begin{aligned}
 L'(\alpha)
 &=\int_{\partial B}\abs[\big]{\Tr u_1}^2\dd S
   +2\mathfrak a_{B,\alpha}(u_1,\partial_\alpha u_1)\\
 &=\int_{\partial B}\abs[\big]{\Tr u_1}^2\dd S
   +2L\langle u_1,\partial_\alpha u_1\rangle_{L^2(B)}\\
 &=\int_{\partial B}\abs[\big]{\Tr u_1}^2\dd S>0.
 \end{aligned}
\]
{Indeed, a zero trace would imply
$L=\int_B\abs{\nabla u_1}^2\dd x\ge0$, contrary to $L<0$.}
As $L<0$ and $G>0$, we now obtain
\[
 \calR_B'(\alpha)
 =\frac{G'L-GL'}{L^2}<0.
\]
Here $G'L\le0$ and $-GL'<0$; no relative-growth estimate is needed.

The constant-function bound implies $L\to-\infty$ as
$\alpha\to-\infty$. Together with $G\to0$, it gives
$\calR_B=1+G/L\to1$. At $\alpha\uparrow0$, continuity at the
Neumann problem gives $L\to0^-$ and $G\to\pi^2/(4T^2)>0$, so
$\calR_B\to-\infty$. The strict decrease and these two limits give
a unique zero of the ratio. Since $L<0$ throughout this range, it
is exactly the unique zero of $\lambda_2$. The remaining signs
follow as well.
\end{proof}

\section{Connected counterexamples}
\label{sec:counterexample}

\quad\, We first choose the components so that their two first eigenvalues
are the first two eigenvalues of the union. This must be checked
before comparing their ratio: a cube that is too small would
contribute its first eigenvalue only at a higher spectral position.

Fix $n\ge2$ and choose
\begin{equation}\label{eq:srange}
 \sqrt{\frac{n}{n+3}}<s<1.
\end{equation}
Let
\[
 \begin{gathered}
 Q_1=(-1,1)^n,\,\,\,
 Q_2=(2,2+2s)\times(-s,s)^{n-1},\,\,\,
 D=Q_1\cup Q_2.
 \end{gathered}
\]
The cubes have side lengths $2$ and $2s$, and are separated by a
distance of one in the first coordinate. Separation of variables
gives
\[
 \lambda_1(Q_1;\alpha)=n p(\alpha),\,\,\,
 \lambda_1(Q_2;\alpha)=n s^{-2}p(s\alpha).
\]
Their first-eigenvalue ratio is therefore
\begin{equation}\label{eq:Rlimit}
 R_s(\alpha)
 =\frac{\lambda_1(Q_2;\alpha)}{\lambda_1(Q_1;\alpha)}
 =s^{-2}\frac{p(s\alpha)}{p(\alpha)}.
\end{equation}

\begin{lemma}\label{lem:disconnected}
{The function $R_s$ satisfies $R_s'(\alpha)>0$ for every $\alpha>0$, and}
\begin{equation}\label{eq:Rrange}
 \frac1s<R_s(\alpha)<\frac1{s^2}.
\end{equation}
For every $\alpha>0$,
\begin{equation}\label{eq:ordering}
 \lambda_1(D;\alpha)=\lambda_1(Q_1;\alpha),\,\,\,
 \lambda_2(D;\alpha)=\lambda_1(Q_2;\alpha).
\end{equation}
In particular, $\calR_D=R_s$ is strictly increasing.
\end{lemma}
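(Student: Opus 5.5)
The plan has three parts: get the monotonicity of $R_s$ from the logarithmic derivative, get the two-sided bound \eqref{eq:Rrange} from the endpoint behaviour, and then use that bound to fix the spectral ordering of $D$. Throughout, $R_s$ is a ratio of smooth positive functions, so its logarithmic derivative is available.

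\textbf{Monotonicity.} Differentiating \eqref{eq:Rlimit} gives
\[
 \alpha\frac{R_s'(\alpha)}{R_s(\alpha)}=\frac{s\alpha\,p'(s\alpha)}{p(s\alpha)}-\frac{\alpha p'(\alpha)}{p(\alpha)}=E(s\alpha)-E(\alpha).
\]
Since $s\alpha<\alpha$ and $E$ is strictly decreasing by Lemma~\ref{lem:derivatives}, this difference is positive. Hence $R_s'>0$ on $(0,+\infty)$.

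\textbf{The bound \eqref{eq:Rrange}.} Because $R_s$ is strictly increasing, each of its values lies strictly between its two endpoint limits. By \eqref{eq:plimits}, $p(a)\sim a$ as $a\downarrow0$, so $R_s(\alpha)\to s^{-2}\cdot s=1/s$. As $\alpha\to+\infty$, both $p(s\alpha)$ and $p(\alpha)$ tend to $\pi^2/4$, so $R_s(\alpha)\to s^{-2}$. This gives $1/s<R_s(\alpha)<1/s^2$ for every $\alpha>0$.

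\textbf{Ordering \eqref{eq:ordering}.} The spectrum of $D$ is the merged spectrum of $Q_1$ and $Q_2$, so I need three facts.
\begin{itemize}
\item $\lambda_1(Q_1)<\lambda_1(Q_2)$, which is $R_s>1/s>1$.
\item $\lambda_1(Q_2)<\lambda_2(Q_1)$. Lemma~\ref{lem:longest} with all $t_j=1$ gives $\lambda_2(Q_1)=np(\alpha)+d(\alpha)=(n-1)p(\alpha)+q(\alpha)$. The claim therefore reduces to $nR_s<n-1+q/p$. By \eqref{eq:qpbound}, $q/p>4$, so it suffices that $nR_s\le n+3$, i.e.\ $R_s\le (n+3)/n$. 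This holds because $R_s<s^{-2}<(n+3)/n$ by \eqref{eq:srange}.
\item $\lambda_2(Q_2)>\lambda_1(Q_2)$, which is trivial.
\end{itemize}
So the two lowest eigenvalues of $D$ are $\lambda_1(Q_1)$ and then $\lambda_1(Q_2)$, each simple. It follows that $\calR_D=R_s$, which is strictly increasing.

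The only delicate point is the second inequality in the ordering step. It relies on the strict bound $q/p>4$, the strict upper bound $R_s<s^{-2}$, and the lower constraint on $s$ in \eqref{eq:srange}, which together make the inequality hold for every $\alpha>0$ rather than only asymptotically.
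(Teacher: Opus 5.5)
Your proposal is correct and follows the paper's proof essentially step for step. The steps are the same: the logarithmic derivative $E(s\alpha)-E(\alpha)>0$, then the endpoint limits $1/s$ and $1/s^2$ combined with strict monotonicity, then the chain $R_s<s^{-2}<1+3/n<\lambda_2(Q_1;\alpha)/\lambda_1(Q_1;\alpha)$ via $q/p>4$, and finally simplicity of $\lambda_1(Q_2;\alpha)$ to fix the merged ordering. Reading off $\lambda_2(Q_1;\alpha)=(n-1)p(\alpha)+q(\alpha)$ from Lemma~\ref{lem:longest} rather than directly from separation of variables is only a cosmetic difference.
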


\begin{proof}
For a single box we compared the gap with its first eigenvalue.
Here both quantities being compared are first eigenvalues, but at
different scales. Taking the logarithmic derivative of
\eqref{eq:Rlimit} and using the chain rule, we find
\[
 \begin{aligned}
 \alpha\frac{R_s'(\alpha)}{R_s(\alpha)}
 &=\alpha\left(\frac{s p'(s\alpha)}{p(s\alpha)}
              -\frac{p'(\alpha)}{p(\alpha)}\right)\\
 &=\frac{s\alpha p'(s\alpha)}{p(s\alpha)}
   -\frac{\alpha p'(\alpha)}{p(\alpha)}\\
 &=E(s\alpha)-E(\alpha)\\
 &>0.
 \end{aligned}
\]
The last inequality follows from $s\alpha<\alpha$ and the strict
decrease of $E$ in Lemma~\ref{lem:derivatives}. Thus $R_s$ is strictly
increasing. To compute its limit at zero, write
\[
 R_s(\alpha)
 =\frac1s\,
   \frac{p(s\alpha)/(s\alpha)}{p(\alpha)/\alpha}.
\]
Since both fractions in the final quotient tend to one,
\[
 \lim_{\alpha\downarrow0}R_s(\alpha)=\frac1s.
\]
As $\alpha\to+\infty$, both $p(s\alpha)$ and $p(\alpha)$ tend to
$\pi^2/4$, and hence
\[
 \lim_{\alpha\to+\infty}R_s(\alpha)
 =\frac1{s^2}\,\frac{\pi^2/4}{\pi^2/4}
 =\frac1{s^2}.
\]
The strict increase gives the strict bounds in \eqref{eq:Rrange}.

It remains to identify the first two eigenvalues of the union.
For $Q_1$, the second eigenvalue is obtained by choosing the second
interval eigenvalue in one coordinate, so
\[
 \lambda_2(Q_1;\alpha)=(n-1)p(\alpha)+q(\alpha).
\]
Consequently,
\[
 \begin{aligned}
 \frac{\lambda_2(Q_1;\alpha)}{\lambda_1(Q_1;\alpha)}
 &=\frac{(n-1)p(\alpha)+q(\alpha)}{n p(\alpha)}\\
 &=1+\frac1n\left(\frac{q(\alpha)}{p(\alpha)}-1\right)\\
 &>1+\frac3n,
 \end{aligned}
\]
where \eqref{eq:qpbound} was used. The choice \eqref{eq:srange}
means that $s^2>n/(n+3)$, or equivalently
\[
 \frac1{s^2}<\frac{n+3}{n}=1+\frac3n.
\]
Combining these inequalities gives the full ordering
\[
 1<\frac1s<R_s(\alpha)<\frac1{s^2}<1+\frac3n
 <\frac{\lambda_2(Q_1;\alpha)}{\lambda_1(Q_1;\alpha)}.
\]
Multiplication by $\lambda_1(Q_1;\alpha)>0$ yields
\[
 \lambda_1(Q_1;\alpha)<\lambda_1(Q_2;\alpha)
 <\lambda_2(Q_1;\alpha).
\]
Every other eigenvalue of $Q_1$ is at least $\lambda_2(Q_1;\alpha)$,
and every other eigenvalue of $Q_2$ is strictly greater than
$\lambda_1(Q_2;\alpha)$, because its first eigenvalue is simple.
Thus the first two entries in the merged spectrum are exactly those
in \eqref{eq:ordering}. If $s=1$, the two first
eigenvalues would coincide and their ratio would be the constant
one, so the unequal sizes are essential.
\end{proof}

For the lower bound on the connected domain, we will remove the
Robin boundary term on the small attachment faces of the cubes.
The following lemma shows that this change has a vanishing effect
on each fixed eigenvalue. We write $\abs{\cdot}_{n-1}$ for surface
measure and use the norm
\[
 \norm{u}_{H^1(Q)}^2
 =\norm{u}_{L^2(Q)}^2+\norm{\nabla u}_{L^2(Q)}^2.
\]

\begin{lemma}\label{lem:window}
Let $Q\subset\R^n$ be a fixed bounded Lipschitz domain, let $\alpha>0$,
and let $\Gamma_\eps\subset\partial Q$ be measurable sets such that
$\abs{\Gamma_\eps}_{n-1}\to0$ as $\eps\downarrow0$.
Let $\nu_{k,\eps}$ be the eigenvalues of the form
\[
 \mathfrak a_\eps[u]=\int_Q\abs{\nabla u}^2\dd x
 +\alpha\int_{\partial Q\setminus\Gamma_\eps}
       \abs[\big]{\Tr u}^2\dd S,
 \,\,\, u\in H^1(Q).
\]
Then $\nu_{k,\eps}\to\lambda_k(Q;\alpha)$ for each fixed $k$.
\end{lemma}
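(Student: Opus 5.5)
Since $\alpha>0$, the form $\mathfrak a_\eps$ is pointwise nondecreasing in the boundary set and bounded above by $\mathfrak a_{Q,\alpha}$. The min--max principle therefore gives the trivial upper bound $\nu_{k,\eps}\le\lambda_k(Q;\alpha)$ for every $\eps$. It remains to prove $\liminf_{\eps\downarrow0}\nu_{k,\eps}\ge\lambda_k(Q;\alpha)$. I will argue via compactness of normalized eigenfunctions together with a vanishing estimate for the trace on $\Gamma_\eps$.

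\emph{Step 1 (uniform trace estimate).} I will show that for each $M>0$,
\[
 \sup\Bigl\{\int_{\Gamma_\eps}\abs{\Tr u}^2\dd S:\ \norm{u}_{H^1(Q)}\le M\Bigr\}\longrightarrow0\qquad(\eps\downarrow0).
\]
On a bounded Lipschitz domain the trace is bounded from $H^1(Q)$ into $L^{r}(\partial Q)$ for some $r>2$: take $r=2(n-1)/(n-2)$ when $n\ge3$ and any finite $r$ when $n\le2$. (Alternatively, one can use compactness of the trace into $L^2(\partial Q)$.) Hölder's inequality then gives
\[
 \int_{\Gamma_\eps}\abs{\Tr u}^2\dd S\le\abs{\Gamma_\eps}_{n-1}^{1-2/r}\norm{\Tr u}_{L^r(\partial Q)}^2\le C M^2\abs{\Gamma_\eps}_{n-1}^{1-2/r}\to0.
\]
This is the step I expect to be the main technical point. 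If one prefers to avoid the $L^r$ trace theorem, one can argue by contradiction: take $u_\eps$ bounded in $H^1$, extract a subsequence with $\Tr u_\eps\to g$ strongly in $L^2(\partial Q)$ (compact trace), and note $\int_{\Gamma_\eps}\abs{g}^2\to0$ by absolute continuity of the integral. The cross terms are controlled by the strong convergence.

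\emph{Step 2 (lower bound).} Set $\mathfrak a_{Q,\alpha}[u]\le\mathfrak a_\eps[u]+\alpha\int_{\Gamma_\eps}\abs{\Tr u}^2\dd S$ on the span $V_\eps$ of the first $k$ orthonormal eigenfunctions of $\mathfrak a_\eps$. Every $u\in V_\eps$ with $\norm{u}_{L^2}=1$ satisfies $\int\abs{\nabla u}^2\le\mathfrak a_\eps[u]\le\nu_{k,\eps}\le\lambda_k(Q;\alpha)$, so $\norm{u}_{H^1(Q)}^2\le1+\lambda_k(Q;\alpha)=:M^2$ uniformly in $\eps$. By Step 1,
\[
 \mathfrak a_{Q,\alpha}[u]\le\nu_{k,\eps}+\alpha\,o(1)\qquad\text{uniformly for such }u.
\]
Since $\dim V_\eps=k$, min--max gives $\lambda_k(Q;\alpha)\le\nu_{k,\eps}+o(1)$. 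Combined with the upper bound, this yields $\nu_{k,\eps}\to\lambda_k(Q;\alpha)$.

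No uniformity in $\alpha$ is claimed here; the statement is for fixed $\alpha$ and fixed $k$. The Step 1 bound is uniform on bounded $H^1$ sets, so the argument needs only the Lipschitz regularity of $Q$ for the trace theorem.
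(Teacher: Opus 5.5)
Your proof is correct and follows the paper's route: a uniform vanishing estimate for $\int_{\Gamma_\eps}\abs{\Tr u}^2\dd S$ on $H^1$-bounded sets, then a min--max comparison together with the trivial bound $\nu_{k,\eps}\le\lambda_k(Q;\alpha)$; the paper proves the trace estimate by your alternative compactness argument, and your $L^r$-trace plus H\"older version is also valid and gives the explicit rate $\abs{\Gamma_\eps}_{n-1}^{1-2/r}$. The other difference is in the lower bound: you restrict to the span of the first $k$ eigenfunctions of $\mathfrak a_\eps$ to get an a priori $H^1$ bound, whereas the paper uses $\norm{\nabla u}_{L^2(Q)}^2\le\mathfrak a_\eps[u]$ to prove $\mathfrak a_{Q,\alpha}[u]\le(1+\alpha\eta_\eps)\mathfrak a_\eps[u]+\alpha\eta_\eps\norm{u}_{L^2(Q)}^2$ for all $u\in H^1(Q)$, which gives the two-sided bound $\nu_{k,\eps}\le\lambda_k(Q;\alpha)\le(1+\alpha\eta_\eps)\nu_{k,\eps}+\alpha\eta_\eps$.
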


\begin{proof}
The compactness of the trace map from $H^1(Q)$ to $L^2(\partial Q)$
implies
\begin{equation}\label{eq:eta}
 \eta_\eps:=\sup_{\norm{u}_{H^1(Q)}\le1}
       \int_{\Gamma_\eps}\abs[\big]{\Tr u}^2\dd S
       \longrightarrow0.
\end{equation}
To see this, suppose otherwise. There would be a sequence
$\eps_j\downarrow0$, a number $b>0$, and functions $u_j$ with
$\norm{u_j}_{H^1(Q)}\le1$ such that
\[
 \int_{\Gamma_{\eps_j}}\abs[\big]{\Tr u_j}^2\dd S\ge b.
\]
After passage to a subsequence, the traces converge strongly in
$L^2(\partial Q)$ to some $v$. But
\[
 \begin{aligned}
 \int_{\Gamma_{\eps_j}}\abs[\big]{\Tr u_j}^2\dd S
 &\le 2\int_{\Gamma_{\eps_j}}\abs[\big]{\Tr u_j-v}^2\dd S
       +2\int_{\Gamma_{\eps_j}}\abs{v}^2\dd S\\
 &\le2\norm[\big]{\Tr u_j-v}_{L^2(\partial Q)}^2
       +2\int_{\Gamma_{\eps_j}}\abs{v}^2\dd S
 \longrightarrow0.
 \end{aligned}
\]
Here the first term tends to zero by strong convergence, and the
second does so by absolute continuity of the integral of
$\abs{v}^2$, since $\abs{\Gamma_{\eps_j}}_{n-1}\to0$.
This contradiction proves \eqref{eq:eta}.

By homogeneity, \eqref{eq:eta} gives, for every $u\in H^1(Q)$,
\[
 \int_{\Gamma_\eps}\abs[\big]{\Tr u}^2\dd S
 \le\eta_\eps\norm{u}_{H^1(Q)}^2.
\]
The boundary term in $\mathfrak a_\eps$ is nonnegative because
$\alpha>0$, so $\norm{\nabla u}_{L^2(Q)}^2\le\mathfrak a_\eps[u]$.
We can therefore compare the two forms step by step:
\[
 \begin{aligned}
 \mathfrak a_\eps[u]
 &\le\mathfrak a_{Q,\alpha}[u]\\
 &=\mathfrak a_\eps[u]
   +\alpha\int_{\Gamma_\eps}\abs[\big]{\Tr u}^2\dd S\\
 &\le\mathfrak a_\eps[u]
    +\alpha\eta_\eps\left(
       \norm{\nabla u}_{L^2(Q)}^2+\norm{u}_{L^2(Q)}^2\right)\\
 &\le(1+\alpha\eta_\eps)\mathfrak a_\eps[u]
       +\alpha\eta_\eps\norm{u}_{L^2(Q)}^2.
 \end{aligned}
\]
For $u\ne0$, division by $\norm{u}_{L^2(Q)}^2$ gives the same
comparison for the Rayleigh quotients. The min--max formula is
\[
 \nu_{k,\eps}
 =\min_{\substack{V\subset H^1(Q)\\ \dim V=k}}
   \;\max_{u\in V\setminus\{0\}}
      \frac{\mathfrak a_\eps[u]}{\norm{u}_{L^2(Q)}^2},
\]
and the corresponding formula for $\lambda_k(Q;\alpha)$ has the
same class of subspaces. Taking these maxima and minima in the
quotient comparison yields
\[
 \nu_{k,\eps}\le\lambda_k(Q;\alpha)
 \le(1+\alpha\eta_\eps)\nu_{k,\eps}+\alpha\eta_\eps.
\]
Equivalently,
\[
 \frac{\lambda_k(Q;\alpha)-\alpha\eta_\eps}
      {1+\alpha\eta_\eps}
 \le\nu_{k,\eps}\le\lambda_k(Q;\alpha).
\]
Both bounds tend to $\lambda_k(Q;\alpha)$, proving the claim.
\end{proof}

For $0<\eps<s$, set
\begin{equation}\label{eq:domain}
 \begin{gathered}
 T_\eps=(1,2)\times(-\eps,\eps)^{n-1},\\
 \Omega_\eps=\interior\Bigl(
 \overline{Q_1}\cup\overline{T_\eps}\cup\overline{Q_2}\Bigr).
 \end{gathered}
\end{equation}
These are bounded connected Lipschitz polyhedral domains. In the
plane they are simply connected polygons; see Figure~\ref{fig:domain}.

\begin{figure}[ht]
\centering
\setlength{\unitlength}{1pt}
\begin{picture}(280,118)(-20,-8)
\linethickness{0.7pt}
\put(0,0){\line(1,0){100}}
\put(100,0){\line(0,1){44}}
\put(100,44){\line(1,0){50}}
\put(150,44){\line(0,-1){39}}
\put(150,5){\line(1,0){90}}
\put(240,5){\line(0,1){90}}
\put(240,95){\line(-1,0){90}}
\put(150,95){\line(0,-1){39}}
\put(150,56){\line(-1,0){50}}
\put(100,56){\line(0,1){44}}
\put(100,100){\line(-1,0){100}}
\put(0,100){\line(0,-1){100}}
\put(43,48){$Q_1$}
\put(188,48){$Q_2$}
\put(114,68){$T_\eps$}
\put(111,23){$2\eps$}
\put(125,39){\vector(0,1){5}}
\put(125,61){\vector(0,-1){5}}
\end{picture}
\caption{Two unequal squares joined by a thin passage (not to scale).
The same Robin parameter is imposed on the entire exterior boundary.}
\label{fig:domain}
\end{figure}

We give a self-contained proof of the spectral convergence needed
here, using variational domain perturbation methods; see
\cite{DancerDaners,Daners} for background. We first exclude low
passage eigenvalues and control the cutoff energy at the attachments.
We then use concavity to obtain convergence of the parameter
derivatives on compact intervals.
\begin{proposition}\label{prop:convergence}
For every fixed $\alpha>0$ and every fixed $k\ge1$,
\begin{equation}\label{eq:convergence}
 \lambda_k(\Omega_\eps;\alpha)\longrightarrow\lambda_k(D;\alpha)
 \,\,\,(\eps\downarrow0).
\end{equation}
Moreover, for every compact interval $I\subset(0,+\infty)$,
the first two eigenvalues of $\Omega_\eps$ are simple in a
neighborhood of $I$ for all sufficiently small $\eps$, and
\[
 \lambda_j(\Omega_\eps;\cdot)\longrightarrow\lambda_j(D;\cdot)
 \quad\text{in }C^1(I),\qquad j=1,2.
\]
\end{proposition}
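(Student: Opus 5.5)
Fix $\alpha>0$ and $k\ge1$. We prove \eqref{eq:convergence} by matching an upper and a lower variational bound.

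\emph{Upper bound.} Take the first $k$ eigenfunctions of $D$. Each is supported in a single cube and extends by zero across the passage $T_\eps$. We cannot simply take such a function as a test function on $\Omega_\eps$: its trace on the attachment windows $\Gamma_\eps^i=\partial Q_i\cap\overline{T_\eps}$, of measure $(2\eps)^{n-1}$, is nonzero, so the zero extension has a jump there. We therefore use two cases.
\begin{itemize}
\item[(i)] Restriction of $H^1(\Omega_\eps)$ to the cubes gives an injection into $H^1(Q_1)\oplus H^1(Q_2)$. Functions on $D$ that are extended into $T_\eps$ are the delicate part.
\item[(ii)] Instead, we extend each $u\in H^1(Q_i)$ into $T_\eps$ by reflection across the window face, followed by multiplication by a cutoff in $x_1$ that equals one at the face and vanishes at distance $\eps$.
\end{itemize}
The $H^1$ energy of this extension and its Robin boundary term on the lateral part of $\partial T_\eps$ are controlled by integrals of $u$ and $\nabla u$ over an $\eps$-neighbourhood in $Q_i$, plus a term $\eps^{-2}\int|u|^2$ over that neighbourhood. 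For the finitely many smooth eigenfunctions, bounded near the face, this error is $O(\eps^{n-2})$ up to constants. That suffices for $n\ge3$. For $n=2$ we would rather use a logarithmic or longer cutoff of length $\sqrt\eps$.

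Alternatively, and more simply, we use min--max with the subspace spanned by $u\chi$, where $\chi$ cuts off to zero on a $\delta$-neighbourhood of the window inside $Q_i$ and is taken at scale comparable to $\eps$. This is a capacity argument: small sets in dimension $n\ge2$ have vanishing capacity, with logarithmic cutoffs when $n=2$. It gives $\limsup_{\eps\downarrow0}\lambda_k(\Omega_\eps)\le\lambda_k(D)$, with the form computed on $\Omega_\eps$ where the test function vanishes on $T_\eps$. The Robin term only loses the window, which is harmless since $\alpha>0$.

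\emph{Lower bound.} For $u\in H^1(\Omega_\eps)$, drop the Robin term on the windows and split the form:
\[
 \mathfrak a_{\Omega_\eps,\alpha}[u]\ge\sum_{i}\mathfrak a^{(i)}_\eps[u|_{Q_i}]+\mathfrak b_\eps[u|_{T_\eps}],
\]
where $\mathfrak a^{(i)}_\eps$ is the form of Lemma~\ref{lem:window} on $Q_i$ with $\Gamma_\eps=\Gamma^i_\eps$. Here $\mathfrak b_\eps$ is the form on $T_\eps$ with Robin condition $\alpha$ on the lateral sides and Neumann condition on the two ends. By separation of variables, the lowest eigenvalue of $\mathfrak b_\eps$ is $(n-1)\eps^{-2}p(\alpha\eps)\sim(n-1)\alpha/\eps\to+\infty$.

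The right-hand side is a decoupled direct sum, so min--max gives that $\lambda_k(\Omega_\eps)$ is at least the $k$th eigenvalue of the merged spectra. By Lemma~\ref{lem:window} and the blow-up of the passage spectrum, that merged eigenvalue converges to $\lambda_k(D)$. This proves \eqref{eq:convergence}.

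\emph{$C^1$ convergence.} For each domain, $\alpha\mapsto\lambda_1(\cdot;\alpha)$ and $\alpha\mapsto\lambda_1+\lambda_2$ are minima over subspaces of forms affine in $\alpha$, hence concave. Convergence is pointwise on $(0,\infty)$, and the limits $\lambda_1(D)=np(\alpha)$ and $\lambda_1+\lambda_2=np(\alpha)+ns^{-2}p(s\alpha)$ are differentiable (indeed smooth). Standard convex analysis then gives two facts:
\begin{itemize}
\item[(a)] pointwise convergence of concave functions is locally uniform;
\item[(b)] the one-sided derivatives converge uniformly on compact sets where the limit is $C^1$.
\end{itemize}
For (b) one sandwiches the one-sided derivatives between difference quotients and uses equicontinuity of the limit derivative.

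Simplicity comes from the spacing $\lambda_1(D)<\lambda_2(D)<\lambda_3(D)$, uniformly on a neighbourhood $I'$ of $I$ by Lemma~\ref{lem:disconnected}. Uniform convergence of $\lambda_1,\lambda_2,\lambda_3$ on $I'$ follows from monotonicity in $\alpha$ plus pointwise convergence to continuous limits, which is Dini-type. So for small $\eps$ the eigenvalues $\lambda_1$ and $\lambda_2$ of $\Omega_\eps$ are simple on $I'$. Analytic perturbation theory for the holomorphic family of forms then makes them real-analytic there, so the concave functions are differentiable. Their derivatives converge uniformly to those of the limit, and subtracting gives $C^1$ convergence of $\lambda_2$.

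The main obstacle is the upper-bound cutoff in dimension $n=2$: the capacity of a point-like window is only logarithmically small, so the cutoff must be logarithmic. One must also check that the eigenfunctions of the cubes are bounded near the window faces; elliptic regularity up to flat faces handles this.
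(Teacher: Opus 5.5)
Your proposal is correct and follows essentially the same route as the paper. The lower bound decouples the three pieces, using Lemma~\ref{lem:window} and the passage blow-up $(n-1)\eps^{-2}p(\alpha\eps)\to+\infty$. The upper bound uses cutoffs that vanish near the windows: linear at scale $\eps$ for $n\ge3$, and logarithmic between $2\eps$ and $\sqrt\eps$ for $n=2$. The $C^1$ convergence then comes from monotone-in-$\alpha$ uniform convergence, the spectral gap, analytic perturbation, and concavity of $\lambda_1$ and $\lambda_1+\lambda_2$.

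Two small points. Your aside in (i) that restriction to the cubes is an injection is false, since functions supported in $T_\eps$ are lost, but nothing depends on it. Also, the cube eigenfunctions are explicit products of interval eigenfunctions, so their boundedness near the windows needs no elliptic regularity.
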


\begin{proof}
{In Steps 1--3, $\alpha>0$ and $k\ge1$ are fixed.}

\emph{Step 1. A lower bound obtained by separating the three pieces.}
The interfaces are
\[
 \Gamma_{1,\eps}=\{1\}\times(-\eps,\eps)^{n-1},\,\,\,
 \Gamma_{2,\eps}=\{2\}\times(-\eps,\eps)^{n-1}.
\]
On $Q_i$, let $\mathfrak a_{i,\eps}$ be the form in
Lemma~\ref{lem:window}, with $\Gamma_\eps=\Gamma_{i,\eps}$.
On the passage, let
\[
 \mathfrak t_\eps[w]
 =\int_{T_\eps}\abs{\nabla w}^2\dd x
  +\alpha\int_{\partial_{\mathrm{lat}}T_\eps}
        \abs[\big]{\Tr w}^2\dd S,
 \,\,\, w\in H^1(T_\eps),
\]
where $\partial_{\mathrm{lat}}T_\eps$ consists of the lateral faces,
not the two ends. Consider the form
\[
 \mathfrak b_\eps[u_1,w,u_2]
 =\mathfrak a_{1,\eps}[u_1]+\mathfrak t_\eps[w]
  +\mathfrak a_{2,\eps}[u_2]
\]
on the larger space
\[
 \mathcal H_\eps
 =H^1(Q_1)\oplus H^1(T_\eps)\oplus H^1(Q_2).
\]
This means that the functions on the three pieces are allowed to
be independent. In boundary-condition language, the cut faces have
Neumann conditions on both sides, while the exterior faces keep
their Robin conditions.

If $u\in H^1(\Omega_\eps)$, its restrictions define an element
$Ju=(u|_{Q_1},u|_{T_\eps},u|_{Q_2})$ of $\mathcal H_\eps$.
The restrictions have matching traces at the interfaces, and
\[
 \begin{aligned}
 \norm{Ju}_{L^2(Q_1)\oplus L^2(T_\eps)\oplus L^2(Q_2)}^2
 &=\norm{u}_{L^2(\Omega_\eps)}^2,\\
 \mathfrak b_\eps[Ju]&=\mathfrak a_{\Omega_\eps,\alpha}[u].
 \end{aligned}
\]
The second equality holds because the interfaces are interior to
$\Omega_\eps$ and neither form includes a boundary integral there.
Thus the Rayleigh quotient is unchanged on the original space,
while allowing all of $\mathcal H_\eps$ enlarges the set of admissible
subspaces in the min--max principle. If $\mu_{k,\eps}$ denotes the
$k$th eigenvalue of $\mathfrak b_\eps$, then
\[
 \mu_{k,\eps}\le\lambda_k(\Omega_\eps;\alpha).
\]

Let $\nu_{j,i,\eps}$ be the eigenvalues on $Q_i$ with the boundary
term removed on $\Gamma_{i,\eps}$. Since
$\abs{\Gamma_{i,\eps}}_{n-1}=(2\eps)^{n-1}\to0$,
Lemma~\ref{lem:window} gives
\[
 \nu_{j,i,\eps}\longrightarrow\lambda_j(Q_i;\alpha)
 \,\,\,(i=1,2)
\]
for each fixed $j$. On $T_\eps$, the longitudinal Neumann eigenvalues
are $(m\pi)^2$, $m=0,1,2,\ldots$, because its length is one. Each
transverse interval has first Robin eigenvalue
$\eps^{-2}p(\alpha\eps)$. Separation of variables therefore gives
the lowest passage eigenvalue
\begin{equation}\label{eq:tube}
 \begin{aligned}
 \tau_{1,\eps}
 &=0+\sum_{j=1}^{n-1}\eps^{-2}p(\alpha\eps)\\
 &=(n-1)\eps^{-2}p(\alpha\eps)
 \sim\frac{(n-1)\alpha}{\eps}
 \longrightarrow+\infty.
 \end{aligned}
\end{equation}
Both $n\ge2$ and $\alpha>0$ are used in the last limit.
The factor $1/\eps$ expresses the growth of lateral boundary area
relative to passage volume. Thus the positive Robin boundary term,
not the small volume alone, excludes bounded passage eigenvalues.

Let $\widehat\nu_{k,\eps}$ be the $k$th eigenvalue of the two cube
problems together. It is the $k$th smallest entry of
$\{\nu_{j,i,\eps}:1\le j\le k,\ i=1,2\}$, counted with
multiplicity. Changing each entry by at most $\delta$ changes the
$k$th smallest entry by at most $\delta$. Hence
\[
 \abs[\big]{\widehat\nu_{k,\eps}-\lambda_k(D;\alpha)}
 \le\max_{\substack{1\le j\le k\\ i=1,2}}
       \abs[\big]{\nu_{j,i,\eps}-\lambda_j(Q_i;\alpha)}
 \longrightarrow0.
\]
By \eqref{eq:tube}, all passage eigenvalues exceed
$\widehat\nu_{k,\eps}$ for small $\eps$, so
$\mu_{k,\eps}=\widehat\nu_{k,\eps}$. The variational bound
$\lambda_k(\Omega_\eps;\alpha)\ge\mu_{k,\eps}$ now gives
\begin{equation}\label{eq:liminf}
 \liminf_{\eps\downarrow0}\lambda_k(\Omega_\eps;\alpha)
 \ge\lim_{\eps\downarrow0}\mu_{k,\eps}
 =\lambda_k(D;\alpha).
\end{equation}

\emph{Step 2. Cutoff functions near the attachment points.}
Let $P_1=(1,0,\ldots,0)$ and $P_2=(2,0,\ldots,0)$.
We construct Lipschitz functions $\chi_{i,\eps}:\R^n\to[0,1]$
which vanish in a neighborhood of the whole interface
$\Gamma_{i,\eps}$ and satisfy
\begin{equation}\label{eq:cutoffprops}
 \abs[\big]{\supp(1-\chi_{i,\eps})}\longrightarrow0,
 \,\,\,
 \int_{\R^n}\abs{\nabla\chi_{i,\eps}}^2\dd x\longrightarrow0.
\end{equation}
Here and below, $\abs{\cdot}$ applied to a subset of $\R^n$ denotes
its $n$-dimensional Lebesgue measure.

For $n\ge3$, put $\rho_\eps=2\sqrt n\,\eps$ and define, with
$r=\abs{x-P_i}$,
\[
 \chi_{i,\eps}(x)=
 \begin{cases}
 0,&r\le\rho_\eps,\\[2pt]
 \displaystyle\frac{r-\rho_\eps}{\rho_\eps},
    &\rho_\eps<r<2\rho_\eps,\\[6pt]
 1,&r\ge2\rho_\eps.
 \end{cases}
\]
Every point of $\overline{\Gamma_{i,\eps}}$ lies at distance at most
$\sqrt{n-1}\,\eps<\rho_\eps$ from $P_i$, so the required vanishing
holds. If $\omega_n$ denotes the volume of the unit ball, then
\[
 \abs[\big]{\supp(1-\chi_{i,\eps})}
 \le\omega_n(2\rho_\eps)^n\longrightarrow0.
\]
The gradient has magnitude $1/\rho_\eps$ in the annulus and is
zero elsewhere, apart from sets of measure zero. Hence
\[
 \begin{aligned}
 \int_{\R^n}\abs{\nabla\chi_{i,\eps}}^2\dd x
 &=\frac{n\omega_n}{\rho_\eps^2}
      \int_{\rho_\eps}^{2\rho_\eps}r^{n-1}\dd r\\
 &=\omega_n(2^n-1)\rho_\eps^{n-2}
 \longrightarrow0.
 \end{aligned}
\]
A cutoff changing linearly between these two radii would have
nonvanishing energy in dimension two, so a different choice is needed
there.

For $n=2$, let $r_\eps=2\eps$ and $\rho_\eps=\sqrt\eps$.
For $\eps<1/4$, $r_\eps<\rho_\eps$. Define
\[
 \chi_{i,\eps}(x)=
 \begin{cases}
 0,&\abs{x-P_i}\le r_\eps,\\[2pt]
 \displaystyle\frac{\log\bigl(\abs{x-P_i}/r_\eps\bigr)}
 {\log(\rho_\eps/r_\eps)},
   &r_\eps<\abs{x-P_i}<\rho_\eps,\\[6pt]
 1,&\abs{x-P_i}\ge\rho_\eps.
 \end{cases}
\]
The whole interface lies inside the disk of radius $\eps<r_\eps$.
Also,
\[
 \abs[\big]{\supp(1-\chi_{i,\eps})}
 \le\pi\rho_\eps^2=\pi\eps\longrightarrow0.
\]
In the annulus, the radial derivative is
$\bigl(r\log(\rho_\eps/r_\eps)\bigr)^{-1}$. Polar coordinates give
\[
 \begin{aligned}
 \int_{\R^2}\abs{\nabla\chi_{i,\eps}}^2\dd x
 &=\frac{2\pi}{\bigl[\log(\rho_\eps/r_\eps)\bigr]^2}
      \int_{r_\eps}^{\rho_\eps}\frac{\dd r}{r}\\
 &=\frac{2\pi}{\log(\rho_\eps/r_\eps)}\longrightarrow0,
 \end{aligned}
\]
since
\[
 \log(\rho_\eps/r_\eps)
 =\frac12\log(1/\eps)-\log2\longrightarrow+\infty.
\]
This proves both assertions in \eqref{eq:cutoffprops} in every
dimension under consideration.

\emph{Step 3. Trial functions and the upper bound.}
Choose real orthonormal eigenfunctions $\phi_1,\ldots,\phi_k$ for
the first $k$ eigenvalues on $D$, each supported on one cube.
Such a choice is possible even if the two cube spectra have common
eigenvalues. On each cube, the eigenfunctions may be chosen as
products of interval eigenfunctions. Thus $\phi_\ell$ and
$\nabla\phi_\ell$ are bounded on their supporting cube.

If $\phi_\ell$ is supported on $Q_i$, define
\[
 v_{\ell,\eps}(x)=
 \begin{cases}
 \chi_{i,\eps}(x)\phi_\ell(x),&x\in Q_i,\\
 0,&x\in\Omega_\eps\setminus Q_i.
 \end{cases}
\]
The cutoff vanishes in a neighborhood of the attachment face.
The pieces therefore have matching zero traces there, and their
union belongs to $H^1(\Omega_\eps)$.

We spell out the convergence on the fixed cube. Put
$S_{i,\eps}=Q_i\cap\supp(1-\chi_{i,\eps})$ and choose a constant
$M_\ell$ such that
$\abs{\phi_\ell}+\abs{\nabla\phi_\ell}\le M_\ell$ on $Q_i$.
Since $0\le\chi_{i,\eps}\le1$,
\[
 \norm[\big]{(\chi_{i,\eps}-1)\phi_\ell}_{L^2(Q_i)}^2
 \le M_\ell^2\abs{S_{i,\eps}}\longrightarrow0.
\]
The product rule gives
\[
 \nabla\bigl((\chi_{i,\eps}-1)\phi_\ell\bigr)
 =(\chi_{i,\eps}-1)\nabla\phi_\ell
   +\phi_\ell\nabla\chi_{i,\eps}.
\]
Using $\abs{a+b}^2\le2\abs{a}^2+2\abs{b}^2$, we obtain
\[
 \begin{aligned}
 \norm[\Big]{\nabla\bigl((\chi_{i,\eps}-1)\phi_\ell\bigr)}_{L^2(Q_i)}^2
 &\le2M_\ell^2\abs{S_{i,\eps}}
   +2M_\ell^2\int_{Q_i}\abs{\nabla\chi_{i,\eps}}^2\dd x\\
 &\longrightarrow0
 \end{aligned}
\]
by \eqref{eq:cutoffprops}. Hence
$\chi_{i,\eps}\phi_\ell\to\phi_\ell$ strongly in $H^1(Q_i)$.
The continuity of the trace on this fixed cube also yields
\[
 \norm[\Big]{\Tr(\chi_{i,\eps}\phi_\ell)-\Tr\phi_\ell}_{L^2(\partial Q_i)}
 \le C_i\norm[\big]{(\chi_{i,\eps}-1)\phi_\ell}_{H^1(Q_i)}
 \longrightarrow0.
\]
In particular, the boundary integrals converge. For example, the
absolute difference of the two squared trace integrals is at most
\[
 \begin{aligned}
 \norm[\Big]{\Tr(\chi_{i,\eps}\phi_\ell)-\Tr\phi_\ell}_{L^2(\partial Q_i)}
\times\left(
   \norm[\big]{\Tr(\chi_{i,\eps}\phi_\ell)}_{L^2(\partial Q_i)}
   +\norm[\big]{\Tr\phi_\ell}_{L^2(\partial Q_i)}\right),
 \end{aligned}
\]
which tends to zero. All trace estimates here are on the fixed
cubes; no uniform trace estimate on $\Omega_\eps$ is needed.

Let $w_{\ell,\eps}=v_{\ell,\eps}|_D$. The preceding estimates say
that $w_{\ell,\eps}\to\phi_\ell$ in $H^1(D)$, where the squared norm is the
sum of the squared $H^1$ norms on the two components.
Because $v_{\ell,\eps}$ is zero in the passage and its cube trace
is zero on the attachment face, we have the exact identities
\[
 \begin{aligned}
 \langle v_{\ell,\eps},v_{m,\eps}\rangle_{L^2(\Omega_\eps)}
 &=\langle w_{\ell,\eps},w_{m,\eps}\rangle_{L^2(D)},\\
 \mathfrak a_{\Omega_\eps,\alpha}
       (v_{\ell,\eps},v_{m,\eps})
 &=\mathfrak a_{D,\alpha}(w_{\ell,\eps},w_{m,\eps}).
 \end{aligned}
\]
Here $\mathfrak a(\cdot,\cdot)$ denotes the symmetric bilinear form
associated with the quadratic form. In the second identity the
Robin boundary term on each attachment face is zero on both sides;
thus replacing the exterior cube faces by the full cube boundaries
introduces no extra term. Strong $H^1$ convergence and continuity
of the trace now give
\[
 \begin{aligned}
 M_{\ell m,\eps}
 &:=\langle v_{\ell,\eps},v_{m,\eps}\rangle_{L^2(\Omega_\eps)}
   \longrightarrow\delta_{\ell m},\\
 A_{\ell m,\eps}
 &:=\mathfrak a_{\Omega_\eps,\alpha}
       (v_{\ell,\eps},v_{m,\eps})
   \longrightarrow\lambda_\ell(D;\alpha)\delta_{\ell m}.
 \end{aligned}
\]
The second limit also uses the eigenfunction identity
$\mathfrak a_{D,\alpha}(\phi_\ell,\phi_m)
=\lambda_\ell(D;\alpha)\langle\phi_\ell,\phi_m\rangle$.

To obtain a bound for all trial functions at once, write
$M_\eps=(M_{\ell m,\eps})$, $A_\eps=(A_{\ell m,\eps})$, and
\[
 \Lambda=\diag\bigl(\lambda_1(D;\alpha),\ldots,
                       \lambda_k(D;\alpha)\bigr).
\]
Set
\[
 \theta_\eps=k\max_{1\le\ell,m\le k}
 \max\left\{
   \abs{M_{\ell m,\eps}-\delta_{\ell m}},
   \abs{A_{\ell m,\eps}-\Lambda_{\ell m}}\right\}
 \longrightarrow0.
\]
For $b\in\R^k$, the inequality
$\bigl(\sum_{\ell=1}^k\abs{b_\ell}\bigr)^2\le k\abs{b}^2$
gives
\[
 \begin{aligned}
 \norm*{\sum_{\ell=1}^k b_\ell v_{\ell,\eps}}_{L^2(\Omega_\eps)}^2
 &=b^{\mathsf T}M_\eps b
 \ge(1-\theta_\eps)\abs{b}^2,\\
 \mathfrak a_{\Omega_\eps,\alpha}
   \left[\sum_{\ell=1}^k b_\ell v_{\ell,\eps}\right]
 &=b^{\mathsf T}A_\eps b
 \le\bigl(\lambda_k(D;\alpha)+\theta_\eps\bigr)\abs{b}^2.
 \end{aligned}
\]
For small $\eps$, $\theta_\eps<1$, so the trial functions are
linearly independent. Applying the min--max principle on
$V_\eps=\spanop\{v_{1,\eps},\ldots,v_{k,\eps}\}$ gives
\[
 \lambda_k(\Omega_\eps;\alpha)
 \le\max_{v\in V_\eps\setminus\{0\}}
       \frac{\mathfrak a_{\Omega_\eps,\alpha}[v]}
            {\norm{v}_{L^2(\Omega_\eps)}^2}
 \le\frac{\lambda_k(D;\alpha)+\theta_\eps}{1-\theta_\eps}.
\]
Taking the upper limit and using \eqref{eq:liminf} proves
\eqref{eq:convergence}.

\emph{Step 4. Uniform convergence and parameter derivatives.}
Fix a compact interval $I\subset(0,+\infty)$ and choose a larger
compact interval $J\subset(0,+\infty)$ with $I\subset\interior J$.
Write
\[
 \lambda_{j,\eps}(\alpha)=\lambda_j(\Omega_\eps;\alpha),
 \qquad \ell_j(\alpha)=\lambda_j(D;\alpha).
\]
For $j=1,2,3$, the convergence in \eqref{eq:convergence} is uniform
on $J$. Indeed, the min--max principle makes $\lambda_{j,\eps}$
nondecreasing in $\alpha$, and the ordered cube eigenvalues
$\ell_j$ are continuous. Given $\delta>0$, partition $J$ into
finitely many intervals on each of which $\ell_j$ changes by less
than $\delta$. Once the errors at all partition points are less
than $\delta$, monotonicity bounds the error at every intermediate
point by $2\delta$.

By Lemma~\ref{lem:disconnected}, $\ell_1<\ell_2<\ell_3$ on $J$.
Continuity and compactness give
\[
 \gamma:=\min_{\alpha\in J}
 \min\bigl\{\ell_2(\alpha)-\ell_1(\alpha),
            \ell_3(\alpha)-\ell_2(\alpha)\bigr\}>0.
\]
Uniform convergence therefore preserves both gaps, with lower
bound $\gamma/2$, for sufficiently small $\eps$. In particular,
$\lambda_{1,\eps}$ and $\lambda_{2,\eps}$ are simple throughout $J$.
For each fixed such $\eps$, the Robin form has the fixed domain
$H^1(\Omega_\eps)$ and depends affinely on $\alpha$; its boundary
term is bounded with respect to the $H^1$ norm by the trace theorem.
Analytic perturbation theory for forms thus makes these simple
eigenvalues analytic on $\interior J$; see
\cite[Chapter~VII, Sections~3--4]{Kato}.

Set $F_\eps=\lambda_{1,\eps}$ and
$S_\eps=\lambda_{1,\eps}+\lambda_{2,\eps}$.
The Rayleigh principle shows that $F_\eps$ is concave in $\alpha$.
The variational formula for the sum of the first two eigenvalues is
\[
 S_\eps(\alpha)
 =\inf_{\substack{u_1,u_2\in H^1(\Omega_\eps)\\
       \langle u_i,u_j\rangle_{L^2(\Omega_\eps)}=\delta_{ij}
       \ (i,j=1,2)}}
 \bigl(\mathfrak a_{\Omega_\eps,\alpha}[u_1]
       +\mathfrak a_{\Omega_\eps,\alpha}[u_2]\bigr).
\]
Indeed, fix $\eps$ and $\alpha$, and let
$(e_m)_{m\ge1}$ be an $L^2(\Omega_\eps)$-orthonormal eigenbasis with
corresponding eigenvalues $\mu_m=\lambda_m(\Omega_\eps;\alpha)$.
For any admissible pair $(u_1,u_2)$, write
\[
 c_{im}=\langle u_i,e_m\rangle_{L^2(\Omega_\eps)},
 \qquad p_m=\abs{c_{1m}}^2+\abs{c_{2m}}^2.
\]
Parseval's identity gives $\sum_{m\ge1}p_m=2$. Bessel's inequality,
applied to $e_m$ and the orthonormal pair $(u_1,u_2)$, gives
$0\le p_m\le\norm{e_m}_{L^2(\Omega_\eps)}^2=1$.
The spectral representation of the quadratic form on its domain
$H^1(\Omega_\eps)$ therefore yields
\[
 \begin{aligned}
 \mathfrak a_{\Omega_\eps,\alpha}[u_1]
 +\mathfrak a_{\Omega_\eps,\alpha}[u_2]
 &=\sum_{m\ge1}\mu_m p_m\\
 &\ge\mu_1p_1+\mu_2\sum_{m\ge2}p_m\\
 &=\mu_1p_1+\mu_2(2-p_1)\\
 &=\mu_1+\mu_2+(\mu_2-\mu_1)(1-p_1)\\
 &\ge\mu_1+\mu_2.
 \end{aligned}
\]
Taking $u_1=e_1$ and $u_2=e_2$ gives equality, proving the formula.
Its
right-hand side is an infimum of affine functions of $\alpha$,
so $S_\eps$ is concave as well. Their uniform limits on $J$ are
$F=\ell_1$ and $S=\ell_1+\ell_2$, which are smooth by
\eqref{eq:ordering}.

We use an elementary consequence of concavity. If a differentiable
concave function $f_\eps$ converges uniformly on $J$ to a function
$f\in C^1(J)$, then, for $\alpha\in I$ and sufficiently small $h>0$,
\[
 \frac{f_\eps(\alpha+h)-f_\eps(\alpha)}h
 \le f_\eps'(\alpha)
 \le\frac{f_\eps(\alpha)-f_\eps(\alpha-h)}h.
\]
More explicitly, take $0<h<\operatorname{dist}(I,\partial J)$,
so that $\alpha\pm h\in J$ for every $\alpha\in I$, and put
$E_\eps=\sup_{\beta\in J}\abs{f_\eps(\beta)-f(\beta)}$.
Each difference quotient contains two function values, so
\[
 \begin{aligned}
 \frac{f_\eps(\alpha+h)-f_\eps(\alpha)}h
 &\ge\frac{f(\alpha+h)-f(\alpha)}h-\frac{2E_\eps}h,\\
 \frac{f_\eps(\alpha)-f_\eps(\alpha-h)}h
 &\le\frac{f(\alpha)-f(\alpha-h)}h+\frac{2E_\eps}h.
 \end{aligned}
\]
The fundamental theorem of calculus also gives
\[
 \begin{aligned}
 \frac{f(\alpha+h)-f(\alpha)}h
 &=\frac1h\int_0^h f'(\alpha+t)\dd t
 \ge f'(\alpha)-\omega_{f'}(h),\\
 \frac{f(\alpha)-f(\alpha-h)}h
 &=\frac1h\int_0^h f'(\alpha-t)\dd t
 \le f'(\alpha)+\omega_{f'}(h).
 \end{aligned}
\]
Here $\omega_{f'}$ is defined below; it tends to zero because $f'$
is uniformly continuous on the compact interval $J$.
Combining these estimates with the concavity bounds yields
\[
 -\omega_{f'}(h)-\frac{2E_\eps}h
 \le f_\eps'(\alpha)-f'(\alpha)
 \le\omega_{f'}(h)+\frac{2E_\eps}h.
\]
Comparing these slopes with those of $f$ gives
\[
 \sup_{\alpha\in I}\abs{f_\eps'(\alpha)-f'(\alpha)}
 \le \omega_{f'}(h)
    +\frac2h\sup_{\alpha\in J}\abs{f_\eps(\alpha)-f(\alpha)},
\]
where
\[
 \omega_{f'}(h)
 =\sup\bigl\{\abs{f'(x)-f'(y)}:
                x,y\in J,\ \abs{x-y}\le h\bigr\}
 \longrightarrow0\quad(h\downarrow0).
\]
First letting $\eps\downarrow0$ and then $h\downarrow0$ proves
uniform convergence of the derivatives on $I$. Apply this argument
to $F_\eps$ and $S_\eps$. Since
$\lambda_{2,\eps}=S_\eps-F_\eps$, both eigenvalues converge in
$C^1(I)$, as claimed.
\end{proof}

\begin{proof}[Proof of Theorem~\ref{thm:counterexample}]
Choose $s$ as in \eqref{eq:srange} and consider the domains
$\Omega_\eps$ in \eqref{eq:domain}. By
Proposition~\ref{prop:convergence} and the positivity of
$\lambda_1(D;\alpha)$,
\[
 \calR_{\Omega_\eps}\longrightarrow R_s
 \quad\text{in }C^1\bigl([\alpha_-,\alpha_+]\bigr).
\]
Lemma~\ref{lem:disconnected} and continuity give
\[
 m:=\min_{\alpha\in[\alpha_-,\alpha_+]}R_s'(\alpha)>0.
\]
Hence, for all sufficiently small $\eps$,
\[
 \calR_{\Omega_\eps}'(\alpha)>\frac m2
 \quad\text{for every }\alpha\in[\alpha_-,\alpha_+].
\]
Choose one such $\eps$ and set $\Omega=\Omega_\eps$. This gives the
asserted strict increase, and integration yields \eqref{eq:reversal}.
The required geometry follows from \eqref{eq:domain}.
\end{proof}

\begin{remark}\label{rem:nonmonotone}
For each fixed $\eps>0$, connectedness gives
$\calR_{\Omega_\eps}(\alpha)\to+\infty$ as
$\alpha\downarrow0$~\cite{Laugesen}. Together with
\eqref{eq:reversal}, this rules out monotonicity in either direction
on $(0,+\infty)$ for the domain chosen in
Theorem~\ref{thm:counterexample}. By Proposition~\ref{prop:convergence} and
Lemma~\ref{lem:disconnected}, the two limits do not commute:
\[
 \lim_{\alpha\downarrow0}\,
      \lim_{\eps\downarrow0}\calR_{\Omega_\eps}(\alpha)=\frac1s,
 \,\,\,
 \lim_{\eps\downarrow0}\,
      \lim_{\alpha\downarrow0}\calR_{\Omega_\eps}(\alpha)=+\infty.
\]
The passage width is chosen for the prescribed compact interval;
no fixed width can give strict increase on all of $(0,+\infty)$.
\end{remark}

\begin{remark}
In the planar construction, one may take $s=9/10$,
$\alpha_-=1$ and $\alpha_+=2$.
The proof gives a positive threshold for the passage width below
which $\calR_{\Omega_\eps}'(\alpha)>0$ throughout $[1,2]$, and hence
\eqref{eq:reversal} holds.
\end{remark}

For negative parameters, we distinguish a fixed domain from a
shrinking passage. On a fixed connected Lipschitz domain $\Omega$,
the constant trial function gives
\[
 \lambda_1(\Omega;\alpha)
 \le\alpha\frac{\abs{\partial\Omega}_{n-1}}{\abs{\Omega}}<0,
 \,\,\,
 \calR_\Omega(\alpha)<1.
\]
Write $L(\alpha)=\lambda_1(\Omega;\alpha)$. For $a<b<0$, let
$u_b$ be an $L^2(\Omega)$-normalized first eigenfunction at parameter
$b$. The variational principle gives
\[
 \begin{aligned}
 L(a)&\le\mathfrak a_{\Omega,a}[u_b]\\
 &=L(b)-(b-a)\int_{\partial\Omega}\abs[\big]{\Tr u_b}^2\dd S\\
 &<L(b).
 \end{aligned}
\]
Indeed, a zero trace would imply
$L(b)=\int_\Omega\abs{\nabla u_b}^2\dd x\ge0$, contrary to $L(b)<0$.
Thus $L$ strictly increases. The min--max principle also gives
$\lambda_2(\Omega;a)\le\lambda_2(\Omega;b)$. Whenever
$\lambda_2(\Omega;a)>0$, it follows that
\[
 \frac{\lambda_2(\Omega;b)}{-L(b)}
 \ge\frac{\lambda_2(\Omega;a)}{-L(b)}
 >\frac{\lambda_2(\Omega;a)}{-L(a)}.
\]
Hence $\calR_\Omega(b)<\calR_\Omega(a)$. This proves strict decrease wherever
$\alpha<0$ and $\lambda_2>0$, in particular near $0^-$. The Neumann
limits $\lambda_1\to0^-$ and $\lambda_2\to\lambda_2(\Omega;0)>0$
then give $\calR_\Omega\to-\infty$~\cite{Laugesen}.
For the domains $\Omega_\eps$, every fixed eigenvalue tends instead
to $-\infty$ as the passage shrinks. To see this, fix $\alpha<0$
and $k\ge1$, and take
\[
 f\in W_k:=\spanop\bigl\{\sin\bigl(m\pi(x_1-1)\bigr):
                         1\le m\le k\bigr\}\subset H^1_0(1,2).
\]
Writing $x=(x_1,x')$, set $u(x_1,x')=f(x_1)$ in $T_\eps$ and
extend $u$ by zero to the two cubes. The zero end values of $f$ ensure
$u\in H^1(\Omega_\eps)$. Direct integration gives
\[
 \begin{aligned}
 \norm{u}_{L^2(\Omega_\eps)}^2
 &=(2\eps)^{n-1}\int_1^2\abs{f}^2\dd x_1,\\
 \mathfrak a_{\Omega_\eps,\alpha}[u]
 &=(2\eps)^{n-1}\int_1^2\abs{f'}^2\dd x_1
   +2\alpha(n-1)(2\eps)^{n-2}\int_1^2\abs{f}^2\dd x_1.
 \end{aligned}
\]
For $f\ne0$, orthogonality of the sine functions yields
\[
 \frac{\mathfrak a_{\Omega_\eps,\alpha}[u]}
      {\norm{u}_{L^2(\Omega_\eps)}^2}
 =\frac{\int_1^2\abs{f'}^2\dd x_1}{\int_1^2\abs{f}^2\dd x_1}
   +\frac{(n-1)\alpha}{\eps}
 \le(k\pi)^2+\frac{(n-1)\alpha}{\eps}.
\]
The min--max principle on this $k$-dimensional trial space gives
\begin{equation}\label{eq:negative-collapse}
 \lambda_k(\Omega_\eps;\alpha)
 \le(k\pi)^2+\frac{(n-1)\alpha}{\eps}
 \longrightarrow-\infty\,\,\,(\eps\downarrow0).
\end{equation}
Here $\alpha<0$ is fixed and $\eps\downarrow0$. Comparing
\eqref{eq:tube} with \eqref{eq:negative-collapse}, the sign of the
Robin boundary term explains why the positive-parameter spectral
limit fails: negative parameters create arbitrarily low passage
modes.
At $\alpha=0$, the first eigenvalue is zero and the ratio is
undefined.

The constructed domains are not convex, so
Theorem~\ref{thm:counterexample} leaves the convex case open.
Theorem~\ref{thm:boxes} and the ball result in~\cite{DaiSun}
suggest the following question.
\begin{question}
If $\Omega\subset\R^n$ is a bounded convex domain, is
$\alpha\mapsto\calR_\Omega(\alpha)$ strictly decreasing on
$(0,+\infty)$?
\end{question}

On boxes, the proof works because the first eigenvalue is a sum of
interval eigenvalues and the gap is determined by a longest edge.
For a general convex domain, these separation-of-variables formulas
are unavailable. The remaining issue is whether convexity alone
can provide a comparison of the relative growth of the first two
eigenvalues.
\\ \\
\textbf{The conflicts of interest statement and Data Availability statement.}
\bigskip\\
\indent There is not any conflict of interest.
Data sharing not applicable to this article as no datasets were generated or analysed during the current study.
\\\\
\textbf{Declaration of AI-assisted writing.}
\bigskip\\
\indent During the preparation of this work, the authors used ChatGPT and DeepSeek for language polishing, grammar checking and refining the exposition of certain technical arguments. After using these tools, the authors carefully reviewed and edited the content as needed and take full responsibility for the final version of this manuscript.

\end{document}